\newcommand{\spa}{\operatorname{Span}}
\newcommand{\Zz}{\mathbb{Z}}
\newcommand{\Cc}{\mathbb{C}}
\newcommand{\Pp}{\mathbb{P}}
\newcommand{\Rr}{\mathbb{R}}
\newcommand{\Qq}{\mathbb{Q}}
\newcommand{\ww}{\mathbf{w}}
\newcommand{\uu}{\mathbf{u}}
\newcommand{\0}{\mathbf{0}}
\newcommand{\spec}{\operatorname{Spec}}
\newcommand{\Ver}{\operatorname{Vert}}
\newcommand{\Conv}{\operatorname{Conv}}
\newcommand{\Ann}{\operatorname{Ann}}
\newcommand{\Hom}{\operatorname{Hom}}
\newcommand{\rk}{\operatorname{rank}}
\newcommand{\Oo}{\mathcal{O}}
\newcommand{\Ll}{\mathcal{L}}
\newtheorem{lemma}{Lemma}[section]
\newtheorem{theorem}[lemma]{Theorem}
\newtheorem{proposition}[lemma]{Proposition}
\theoremstyle{definition}
\newtheorem{definition}[lemma]{Definition}
\newtheorem{conjecture}[lemma]{Conjecture}
\newtheorem{remark}[lemma]{Remark}
\theoremstyle{remark}
\newtheorem*{proof*}{Proof}
\numberwithin{equation}{section}
\begin{document}

\title[Birationality of toric multiple mirrors]{On the birationality of complete intersections associated to
nef-partitions}

\author{Zhan Li}
\address{Rutgers University, Department of Mathematics, 110 Frelinghuysen Rd.,
Piscataway \\ NJ \\ 08854 \\ USA} \email{lizhan@math.rutgers.edu}

\curraddr{Beijing International Center for Mathematical Research\\ Peking University, 
Beijing 100871, China} \email{lizhan@math.pku.edu.cn}


\begin{abstract}
We prove that generic complete intersections associated to multiple
mirror nef-partitions are all birational. This result solves a
conjecture by Batyrev and Nill in \cite{BN08} under some mild
assumptions.
\end{abstract}

\maketitle

\tableofcontents


\section{Introduction}\label{sec1}

Mirror symmetry was first discovered in string theory as a duality
between families of $3$-dimensional Calabi-Yau manifolds. Since its
discovery more than twenty years ago, it has drawn much attention
from physicists and mathematicians. Among the methods of
constructions of mirror pairs, Batyrev and Borisov used the complete
intersections in toric varieties \cite{Batyrev94, Borisov93};
Berglund, H\"{u}bsch and Krawitz used the finite quotients of hypersurfaces
in weighted projective space \cite{BH93, Kra10}; Gross and Siebert used the
toric degeneration of Calabi-Yau varieties to connect the
Strominger-Yau-Zaslow approach and the Batyrev-Borisov
approach \cite{Gross05, GS06}.

\medskip

The Batyrev-Borisov construction is one of the best understood settings
in mirror symmetry. Batyrev \cite{Batyrev94} used $\Delta$-regular
hypersurfaces in toric varieties associated to reflexive polytopes
as a way to construct a large set of mirror pairs. In this case, the
mirror pair consists of the family of $\Delta$-regular hypersurfaces
associated to a reflexive polytope and the family of
$\Delta$-regular hypersurfaces associated to its dual polytope.
Borisov \cite{Borisov93} generalized Batyrev's construction by
considering nef-partitions of reflexive polytopes. A nef-partition
of a reflexive polytope corresponds to a decomposition of the
boundary divisor into nef Cartier divisors. In this case, the mirror pairs
are constructed as the family of complete intersections associated
to a nef-partition and the family of complete intersections
associated to its dual nef-partition. These complete intersections
are Calabi-Yau varieties, and their string-theoretic Hodge numbers
behave as predicted by mirror symmetry \cite{BB96}.

\medskip

Compared to hypersurfaces, complete intersections associated to
nef-partitions are more complicated. In particular, they may exhibit
nontrivial multiple mirror phenomenon, i.e. two Calabi-Yau varieties
$X, \tilde{X}$ may have the same mirror $Y$ \cite{CK99} depending on a choice of nef-partition. If
this is the case, the homological mirror symmetry
conjecture \cite{Kontsevich94} implies that the derived categories
of coherent sheaves on $X,\tilde{X}$ are equivalent. Indeed,
according to the conjecture, the derived categories of $X,\tilde{X}$
are expected to be equivalent to the Fukaya categories of their
mirrors, which in this case are the same because $X,\tilde{X}$ are
multiple mirrors.

\medskip

Besides derived equivalence, Batyrev and Nill asked
whether toric multiple mirrors (of any dimension) in the setting
of the Batyrev-Borisov construction are birational (\cite{BN08} Question 5.2). We give an
affirmative answer to this question in Theorem \ref{main} under some
mild assumptions:

\medskip

\noindent{\bf Theorem.}~~ \it~ Let $X, \tilde{X}$ be toric multiple
mirrors and $D$ be the determinantal variety ($D$ is explained in
Section \ref{determinantal variety}), if they are all irreducible
with $\dim D = \dim X =\dim \tilde{X}$, then $X ,\tilde{X}$ are
birational. \rm

\medskip

We noticed that under certain restrictions, birationality of
multiple mirrors are established in the Berglund-H\"ubsch-Krawitz
setting first by Shoemaker (\cite{Shoemaker14}  Theorem 1), then generalized by
Kelly (\cite{Kelly13} Theorem 4.3) and simplified by Clarke (\cite{Clarke13} Corollary 3.7). In Givental's Fano/Landau-Ginzburg setting, Prince shows that for Fano complete intersections in toric varieties, certain Laurent polynomial multiple mirrors are related by a mutation (\cite{Pri}, c.f. \cite{CKP14} Theorem 5.1). A similar result also appears in \cite{HC15} Theorem 2.24.

\medskip

We briefly describe the content of each section:

\medskip

In Section \ref{sec2}, we fix the notation used throughout the
paper. We give necessary background on reflexive Gorenstein cones,
nef-partitions, and their relations. In Section \ref{sec3}, we
explain the combinatorial meaning of multiple mirrors, reformulate
the question of Batyrev and Nill by using reflexive Gorenstein
cones, discuss the motivation of this question and give an example
which motivates our proof. In Section \ref{sec4}, we give a proof of
the main result Theorem \ref{main}. We also discuss the necessity of
its assumptions. Section \ref{application} is devoted to an application
of our main theorem to an example of Calabrese and
Thomas (\cite{CT14} Section 4 Second example). In particular, we show how to
check the extra assumptions on the determinantal variety -- though
the assumption is indispensable, it is quite convenient to check
once the nef-partition is known. In the Appendix, we give
the definition of $\Delta$-regularity, discuss its properties and
establish the fact that generic complete intersection associated to
a nef-partition has $\Delta$-regularity. The results of this section are used in the proof of Theorem \ref{main}.

\medskip

{\bf Acknowledgements.} I would like to express my deep gratitude to my advisor Professor Lev
Borisov for his patient guidance and constant help. I
would like to thank Howard Nuer for useful discussions related to
the subject and Hemanth Saratchandran for reading the manuscript. Finally, I am grateful to anonymous referees for their many valuable suggestions.


\section{Background}\label{sec2}

\subsection{Definitions of Gorenstein cones and nef-partitions}\label{gorenstein nef}

We fix the following notations throughout the paper. Let $M \cong
\Zz^d$ be a lattice of rank $d$, and $N = \Hom_{\Zz}(M,\Zz)$ be its
dual lattice with pairing $\langle \cdot, \cdot \rangle : M \times N
\to \Zz$. Let $M_{\Rr} := M \otimes_\Zz \Rr$, and $N_{\Rr}:= N
\otimes_{\Zz} \Rr$ be the $\Rr$-linear extensions. The pairing
between $M,N$ can be extended to $\langle \cdot, \cdot \rangle :
M_{\Rr} \times N_{\Rr} \to \Rr$. Let $\overline{M}= \Zz^s \oplus M$
be the lattice extended from $M$, and $\overline{N}=(\Zz^s)^\vee \oplus N$
be its dual lattice with pairing:
\[\begin{split}
\overline{M} \times \overline{N} &\to \Zz\\
(a_1,\cdots,a_s;m) \times (b_1,\cdots,b_s;n) &\mapsto \sum_{i=1}^s
a_ib_i + \langle m,n
\rangle\ ,\\
\end{split}
\] where the integer $s$ should be obvious from the context.

\medskip

Our convention of notation for lattices is as follows: we always use $M$ (or $N$) to denote the lattice where \emph{polytopes} live; if a \emph{nef-partition} lives in lattice $M$ (or
$N$), then the corresponding \emph{reflexive Gorenstein cone} will live in
$\overline{M}$ (or $\overline{N}$); however, when talking about general \emph{cones} which
may not necessarily come from nef-partitions, we use $\overline M_1$ (or $\overline N_1$) to denote
their lattice.

\medskip

For a set $S \subset M_\Rr$, we use $\Conv(S) $ to denote its
\emph{convex hull}. If $\Delta \subset M_\Rr$ is a
lattice polytope (i.e. the convex hull of finite lattice points) with the origin $\0$ in its interior ,
then its \emph{dual polytope} is defined as
\[\Delta ^\vee:=\{y \in N_\Rr \mid \langle x, y \rangle \geq -1
~\forall x \in \Delta \}.
\] We use
$\Ver({\Delta})$ to denote the set of \emph{vertices} of a lattice
polytope $\Delta$, and $l(\Delta)$ to denote the set of its
\emph{lattice} points, i.e. $l(\Delta) = \Delta \cap M$.

\begin{definition}[\cite{Batyrev94} Definition 4.1.5]\label{nef}
Let $\Delta$ be a lattice polytope with the origin $\0 \in \Delta$ as
an interior point. If the dual polytope $\Delta^\vee$ is also a
lattice polytope, then $\Delta$ is called a \emph{reflexive polytope}.
\end{definition}

\begin{definition}[\cite{BB97} Definition 2.4]
A $d$-dimensional rational polyhedral cone $K \subset (\overline M_1)_\Rr$ is
called a \emph{Gorenstein cone}, if it is generated by lattice points which
are contained in an affine hyperplane $\{ x\in (\overline M_1)_\Rr\mid \langle
x,n \rangle =1 \}$ for some $n \in \overline N_1$.
\end{definition}

This $n$ is uniquely determined if $\dim K =\rk \overline M_1 $, and this is
the only case considered in this paper. We denote this unique element
by $\deg^\vee$, and call it the \emph{degree element}. By definition,
$\deg^\vee$ must live in $K^\vee \cap \overline N_1$, where $ K^\vee := \{y
\in (\overline N_1)_\Rr \mid \langle x, y \rangle \geq 0~ \forall x \in K\}$
is the \emph{dual cone} of $K$.

\medskip

In general, $K$ is a Gorenstein cone does not imply that $K^\vee$ is a
Gorenstein cone. However, if this is the case, we arrive at the
notion of reflexive Gorenstein cone.

\begin{definition}[\cite{BB97} Definition 2.6]
 A Gorenstein cone $K$ is called a \emph{reflexive Gorenstein cone} if $K^\vee$
  is also a Gorenstein cone. Let $\deg \in
K, \deg^\vee \in K^\vee$ be the degree elements in $K,K^\vee$
respectively, then $\langle \deg,\deg^\vee \rangle$ is called the
\emph{index} of this pair of reflexive Gorenstein cones.
\end{definition}

We will see in a moment how reflexive Gorenstein cones are related to
nef-partitions. Before doing this we briefly recall the
notion of a nef-partition. In the projective toric variety defined by
a reflexive polytope, a nef-partition is equivalent to a
decomposition of the boundary divisor into a summation of nef Cartier
divisors. On the other hand, there exists a purely combinatorial
definition of a nef-partition without invoking any toric variety
constructions. For simplicity, we use this combinatorial definition
here. The readers can find its equivalent form and its motivation in
Borisov's original paper \cite{Borisov93} (Definition 2.5).

\begin{definition}[\cite{KRS03} Proposition 3.2, \cite{BN08} Definition 3.1]\label{NefPartitionDef}
If the Minkowski sum of $s$ lattice polytopes $\sum_{i=1}^{s}
\Delta_i $ is a reflexive polytope, and the origin $\0 \in \Delta_i$
($0$ may not be an interior point) for each $i$, then $\{\Delta_i
\mid i=1,\dots,s \}$ is called a \emph{length $s$ nef-partition} of $\Conv(\cup_{i=1}^s \Delta_i)$.
\end{definition}

Nef-partitions arise in pairs (\cite{Borisov93} Proposition 3.4): if we fixed a
nef-partition $\{\Delta_i \mid i=1,\dots,s \}$  with $\Delta_i
\subset M_\Rr$, then there exists a dual nef-partition $\{\nabla_i
\mid i=1,\dots,s \}$ with $\nabla_i \subset N_\Rr$. The relations
between them are
\begin{equation}\label{eq: relation between nef-partition and its dual}
\begin{split}
 &(\sum_{i=1}^{s} \Delta_i)^\vee = \Conv (\bigcup_{i=1}^{s} \nabla_i)\\
 &(\sum_{i=1}^{s} \nabla_i)^\vee = \Conv (\bigcup_{i=1}^{s} \Delta_i).
\end{split}
\end{equation}
Furthermore, they satisfy the property
\begin{equation}\label{eq: multiple of nef-partitions}
\min\langle \Delta_i , \nabla_j \rangle \geq -\delta_{ij},
\end{equation}
where $\delta_{ij}$ is the Kronecker delta. Moreover, for all $w_j \in \Ver(\nabla_j)-\{0\}$, the minimum value can
be achieved, that is
\begin{equation}\label{eq: achieve -1}
\min_{x \in \Delta_i} \langle x, w_j\rangle = - \delta_{ij}.
\end{equation}

The following figures (\cite{BN08} Example 5.1) exhibit a length 2 nef-partition of the convex polytope 
$\Conv((1,1),(-1,1),(-1,0),(0,-1),(1,0))$ and its dual nef-partition:
\[\begin{array}{cc}
\includegraphics[width=2.3cm]{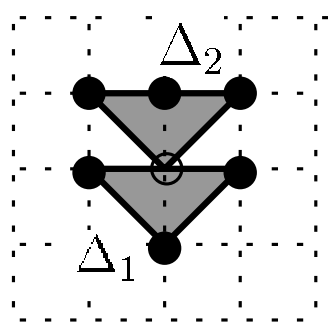} & \includegraphics[width=2.3cm]{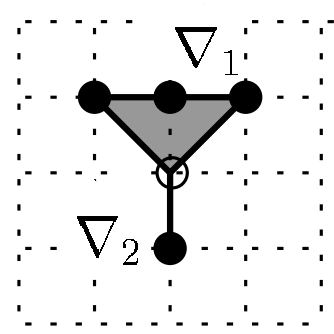}.\\
\end{array}
\] The nef-partitions may not be unique, in fact, another length 2 nef-partition of the same polytope can be obtained as follows
\[\begin{array}{cc}
\includegraphics[width=2.3cm]{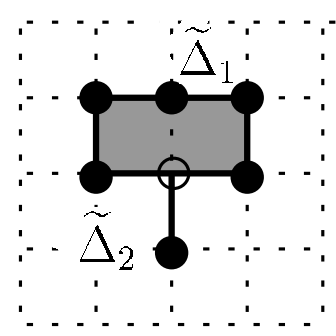} & \includegraphics[width=2.3cm]{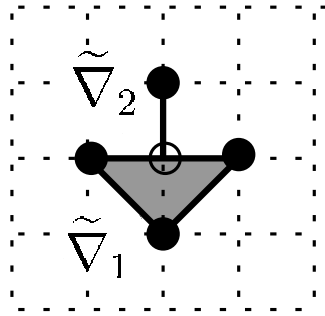}.\\
\end{array}
\]

\subsection{Nef-partitions versus reflexive Gorenstein cones}\label{nef-partition}
From a nef-partition, one can construct a reflexive Gorenstein
cone \cite{BB97}. On the other hand, for a reflexive Gorenstein
cone associated to a nef-partition,  if there exists a decomposition of
the degree element $\deg^\vee$, we can construct another
nef-partition. Now we will give a precise statement of the above
relations, which appeared in a slightly different form in
\cite{BN08}.

\medskip

Let $\overline M_1, \overline N_1$ be dual lattices. Let $K, K^\vee$ be full dimensional reflexive Gorenstein cones in
$(\overline M_1)_\Rr$, $(\overline N_1)_\Rr$ with degree elements $\deg, \deg^\vee$ respectively. Suppose the index is $\langle \deg,
\deg^\vee \rangle = s$ and
\[\deg^\vee = \sum_{i=1}^s e_i,\]
 with $e_i \in \overline N_1 \cap K^\vee $, $e_i
\neq 0$.

\medskip
Let
\begin{equation}\label{eq: some cones of K_1}
\begin{split}&R = \{x \in K \mid \langle x ,\deg^\vee \rangle = 1 \} \\
&R_i = \{x \in K \mid \langle x ,e_i \rangle = 1, \langle x ,e_j
\rangle =0, j \neq i\}\\
&T= \{y \in K^\vee \mid \langle \deg, y \rangle = 1 \}
\end{split}
\end{equation} be lattice polytopes. Because $K$ is a Gorenstein cone, any vertex $v$ of $R$ is a lattice
point. Thus $\langle v,e_i\rangle, 1 \leq i \leq s$ are nonnegative integers which
add up to $1$. Hence, there exists precisely one $e_i$ such that
$\langle v, e_i \rangle =1$. On the other hand, for any $e_j$,
because $e_j \neq 0$ and $K$ is a full dimensional cone, there
exists at least one vertex $w$ of $S$ such that $\langle w, e_j
\rangle =1$. Using these facts, one can show that $\{e_1, \dots,
e_s\}$ must be part of a $\Zz$-basis of $\overline N_1$.

\medskip

Let \[\Ann(e_1,\dots,e_s):=\{m \in \overline M_1 \mid \langle m,e_i \rangle
=0~ \forall i, 1 \leq i \leq s\}\] be a sublattice of $\overline M_1$ (we also denote it by $\Ann(e)$ for simplicity if no confusion arises), and
\[\spa_\Zz\{e_1,\dots,e_s\}:= \sum_{i=1}^s \Zz e_i \] be a sublattice of
$\overline N_1$. From the fact that $\{e_1,\dots,e_s\}$ is part of a
$\Zz$-basis, it follows that the pairing between $\overline M_1$ and $\overline N_1$ induces
a pairing
\[\Ann(e_1,\dots,e_s) \ \times \ \left(\overline N_1 / \spa_\Zz\{e_1,\dots,e_s\}\right)
\to \Zz,
\]
which identifies  $\Ann(e_1,\dots,e_s)$ and $\overline N_1 /
\spa_\Zz\{e_1,\dots,e_s\}$ as dual lattices.

\begin{proposition}[see \cite{BN08}]{\label{conetonef}}
Let $\sum_{i=1}^{s}R_i$ be the Minkowski sum of $R_i$, then the lattice polytope
\[\sum_{i=1}^{s}R_i -\deg \subset
\Ann(e_1,\dots,e_s)_{\Rr}\] is a reflexive polytope.
\end{proposition}

\begin{proof}
This follows from Corollary 2.5 and Theorem 2.6 ((1) $\Rightarrow$
(3)) in \cite{BN08}.
\end{proof}

Its converse is proved in \cite{BN08} Theorem 2.6 ((3) $\Rightarrow$
(1)):
\begin{proposition}[see \cite{BN08}]\label{neftocone}
Let $\Delta_1, \dots, \Delta_s \subset M_{\Rr}$ be lattice polytopes
such that the Minkowski sum $\sum_{i=1}^{s} \Delta_i $ has dimension
$\dim(M_\Rr)$ and $\sum_{i=1}^{s} \Delta_i -m $ is a reflexive
polytope for some $m \in M$. Let $\overline{M}=\Zz^s \oplus M$, then
the associated cone in $\overline{M}_\Rr$
$$K=\{(a_1,\dots,a_s; \sum_{i=1}^{s} a_i \Delta_i) \mid a_i \geq
0\}$$ is a reflexive Gorenstein cone of index $\langle \deg,
\deg^\vee \rangle=s$. In this case, $\deg = (1,\dots,1; m ) \in \overline M, \deg^\vee = (1, \dots, 1; \0) \in \overline N$.
\end{proposition}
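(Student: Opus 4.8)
The plan is to recognise $K$ as the cone over the Cayley polytope of the $\Delta_i$ and to compute $K^\vee$ by hand. First I would record the generators of $K$: writing $\epsilon_1,\dots,\epsilon_s$ for the standard basis of $\Zz^s\subset\overline{M}$, the cone $K$ is generated by the lattice points $(\epsilon_i;v)$ with $v\in\Ver(\Delta_i)$, since an element of $K$ is $\sum_i a_i(\epsilon_i;x_i)$ with $a_i\ge0$ and $x_i\in\Delta_i$. Pairing such a generator with $\deg^\vee:=(1,\dots,1;0)\in\overline{N}$ gives $1$, so all generators lie on $\{\langle\cdot,\deg^\vee\rangle=1\}$ and $K$ is a Gorenstein cone; since $\dim K=s+\dim(\sum_i\Delta_i)=s+\dim M_\Rr=\rk\overline{M}$, the element $\deg^\vee$ is its unique degree element. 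As $\deg^\vee$ is positive on $K\setminus\{0\}$, the cone $K$ is pointed, so $K^\vee$ is full-dimensional. It then remains to show $K^\vee$ is Gorenstein and to compute the index.

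Next I would compute $K^\vee$ explicitly. Setting $\phi_i(n):=\min_{x\in\Delta_i}\langle x,n\rangle$ and expanding the defining inequality $\langle(a_1,\dots,a_s;\sum_i a_ix_i),(b;n)\rangle\ge0$, one finds
\[
K^\vee=\{(b_1,\dots,b_s;n)\in\overline{N}_\Rr\mid b_i\ge-\phi_i(n),\ i=1,\dots,s\}.
\]
I would then propose $\deg:=(1,\dots,1;m)\in\overline{M}$ as the degree element (note $m\in M$, and $\deg\in K$ because $0$ interior to $\Delta:=\sum_i\Delta_i-m$ forces $m\in\sum_i\Delta_i$), together with the candidate generating set
\[
G=\{(\epsilon_i;0)\mid 1\le i\le s\}\ \cup\ \{(-\phi_1(w),\dots,-\phi_s(w);w)\mid w\in\Ver(\Delta^\vee)\}.
\]
Every element of $G$ is a lattice point of $K^\vee$, since each $\phi_i(w)\in\Zz$ ($\Delta_i$ a lattice polytope, $w\in N$) and the vertices of the reflexive polytope $\Delta^\vee$ are lattice points. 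Using that $w\in\Ver(\Delta^\vee)$ forces $\min_{x\in\Delta}\langle x,w\rangle=-1$, hence $\sum_i\phi_i(w)-\langle m,w\rangle=-1$, one checks $\langle\deg,g\rangle=1$ for every $g\in G$.

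The heart of the argument is the identity $K^\vee=\Rr_{\ge0}G$. The inclusion $\Rr_{\ge0}G\subseteq K^\vee$ is immediate. For the reverse inclusion I would use that, since $\Delta$ is reflexive, the normal fan of $\sum_i\Delta_i$ equals the fan over the faces of $\Delta^\vee$ and refines the normal fan of each $\Delta_i$; consequently each $\phi_i$ is linear on every cone $\Rr_{\ge0}F$ spanned by a face $F$ of $\Delta^\vee$. Given $(b;n)\in K^\vee$, I would pick a face $F$ with $n\in\Rr_{\ge0}F$, write $n=\sum_{w\in\Ver(F)}\mu_w w$ with $\mu_w\ge0$, and deduce $\phi_i(n)=\sum_w\mu_w\phi_i(w)$. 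Setting $r_i:=b_i+\sum_w\mu_w\phi_i(w)$, we get
\[
(b;n)=\sum_w\mu_w\,(-\phi_1(w),\dots,-\phi_s(w);w)+(r_1,\dots,r_s;0),
\]
where $r_i\ge0$ precisely because $b_i\ge-\phi_i(n)=-\sum_w\mu_w\phi_i(w)$; the remainder then lies in $\Rr_{\ge0}\{(\epsilon_i;0)\}$, so $(b;n)\in\Rr_{\ge0}G$. Hence $K^\vee$ is generated by the lattice points $G\subset\{\langle\deg,\cdot\rangle=1\}$ and is Gorenstein, with $\deg$ its unique degree element by full-dimensionality, and the index is $\langle\deg,\deg^\vee\rangle=s$.

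I expect the reverse inclusion $K^\vee\subseteq\Rr_{\ge0}G$ to be the main obstacle: a naive decomposition of $n$ into vertices of $\Delta^\vee$ fails because $\phi_i$ is merely concave, so one must decompose $n$ inside a single cone of the normal fan to make each $\phi_i$ exactly linear. Pinning down the required linearity — equivalently, that the common refinement of the $\Delta_i$-normal fans is the face fan of $\Delta^\vee$ — is exactly where the reflexivity of $\sum_i\Delta_i-m$ enters.
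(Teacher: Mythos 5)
Your proposal is correct and takes essentially the same route as the paper: the same candidate generating set for $K^\vee$ (the unit vectors $(\epsilon_i;0)$ together with $(-\phi_1(w),\dots,-\phi_s(w);w)$ for $w\in\Ver\bigl((\sum_{i=1}^s\Delta_i-m)^\vee\bigr)$), the same degree elements $\deg=(1,\dots,1;m)$, $\deg^\vee=(1,\dots,1;0)$, and the same crucial step of decomposing $n$ inside a single cone over a face of the dual polytope so that each support function $\phi_i$ becomes exactly linear. The only divergence is cosmetic: where you invoke the standard fact that the normal fan of a Minkowski sum refines each summand's normal fan and, by reflexivity, coincides with the face fan of $\Delta^\vee$, the paper verifies this linearity inline, reducing by scaling $t$ to a facet $F_v$ of $\nabla$ and computing $\min\langle\Delta_i,t\rangle=\langle v_i,t\rangle$ via the dual vertex $v=\sum_{i=1}^s v_i-m$.
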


Proposition \ref{neftocone} can be applied to the nef-partition $\{\Delta_i \mid 1 \leq i \leq s\}$,
where $\sum_{i=1}^s \Delta_i$ is itself a reflexive polytope with
dual polytope $(\sum_{i=1}^s\Delta_i)^\vee = \Conv(\cup_{i=1}^s
\nabla_i)$. The associated reflexive Gorenstein cone is
\begin{equation}\label{eq: associated ref cones: K}
K=\{(a_1,\dots,a_s;\sum_{i=1}^s a_i \Delta_i)  \subset
(\overline{M})_\Rr \mid a_i\geq 0\}.
\end{equation}
Its dual cone is
\begin{equation}\label{eq: associated ref cones: K^dual}
K^\vee=\{(b_1,\dots,b_s;\sum_{i=1}^s b_i \nabla_i) \subset
(\overline{N})_\Rr \mid b_i\geq 0\},
\end{equation} 
where $\{\nabla_i \mid 1 \leq i \leq s\}$ is the dual nef-partition. Moreover, the index of this pair of reflexive Gorenstein cones is exactly the same as the length of the nef-partition. A result of this form first appeared in \cite{BB97} Theorem 4.6.

\medskip

We will come back to the following setting several times in the sequel. To avoid repeating it each time, we will refer to it as $({\dagger})$:

\begin{quote}\label{dagger}
Let $\{\Delta_i \mid 1\leq i \leq
s\}$ be a nef-partition in $M$, and $\{\nabla_i \mid 1\leq i \leq
s\}$ be its dual nef-partition in $N$. Let $K \subset (\overline M)_\Rr, K^\vee \subset (\overline N)_\Rr$ be the associated reflexive Gorenstein cones as \eqref{eq: associated ref cones: K}, \eqref{eq: associated ref cones: K^dual} with degree elements $\deg, \deg^\vee$ respectively. 
There is a natural length $s$ decomposition
\begin{equation}\label{eq: decomposition of deg^dual}
\deg^\vee  = (1,1,\cdots,1;{\bf 0}) = \sum_{i=1}^s {e}_i,
\end{equation} with
\begin{equation}\label{eq: e_i}
{e}_i = (\underbrace{0,\dots,1,\dots,0}_{{\text{1 at the i-th
position}}};\0) \in \Zz^s \oplus N. 
\end{equation}

Suppose there exists another length $s$ decomposition
\begin{equation}\label{eq: decomposition of deg^dual}
\deg^\vee = (1,1,\cdots,1;{\bf 0}) = \sum_{i=1}^s \tilde{e}_i,
\end{equation} with $\tilde{e}_i \neq {\bf 0},
\tilde{e}_i \in K^\vee \cap \overline{N}$. 
Because the first $s$ coordinates of
$\tilde{e}_i$ are non-negative integers, without loss of generality,
we can assume
\begin{equation}\label{eq: tilde e_i}
\tilde{e}_i = (\underbrace{0,\dots,1,\dots,0}_{{\text{1 at the i-th
position}}};p_i) \in \Zz^s \oplus (N\cap \nabla_i). 
\end{equation}
\end{quote}

For each $\tilde e_i$, we define
$\tilde{R}_i$ as in \eqref{eq: some cones of K_1}. In this case,
$\left(\sum_{i=1}^s \tilde{R}_i - \deg\right)$ is a reflexive
polytope in $\Ann(\tilde{e}_1,\dots,\tilde{e}_s)$ by Proposition {\ref{conetonef}}. We claim that there exists a
lattice isomorphism, with origin mapping to origin,
\[
 \phi:\Ann(\tilde{e}_1,\dots,\tilde{e}_s) \to M
\]
defined by restricting to the projection $p: \Zz^s \oplus M \to M$.
In fact, if $\phi(x)={\bf 0}$, then $x = (a_1,\dots,a_s;{\bf 0})$, but $x \in
\Ann(\tilde{e}_1,\dots,\tilde{e}_s)$ implies that $\forall~ i,
a_i=0$, thus $\phi $ is injective. The surjectivity comes from the
fact that for $m\in M$, if we let $a_i = - \langle m,p_i\rangle$,
then $(a_1,\dots,a_s;m) \in \Ann(\tilde{e}_1,\dots,\tilde{e}_s)$
maps to $m$ under $\phi$.

\medskip

Under this isomorphism, we can identify
$\Ann(\tilde{e}_1,\dots,\tilde{e}_s)$ with $M$. Let
$\tilde{\Delta}_i=p(\tilde{R}_i)$, one can show that
\begin{equation}\label{eq: Delta}
\Conv(\bigcup_{i=1}^s \tilde{\Delta}_i)=\Conv(\bigcup_{i=1}^s {\Delta}_i)=:\Delta.
\end{equation}
Moreover, $(\sum_{i=1}^s\tilde{\Delta}_i)$ is a reflexive polytope in $M$ since $\phi(\deg) = {\bf 0}$ and $(\sum_{i=1}^s\tilde{R}_i-\deg)$ is a
reflexive polytope in $\Ann(e_1,\dots,e_s)$.
Because $e_i \in \tilde{R}_i$, we have ${\bf 0}\in \tilde{\Delta}_i$, and
this implies that $\{\tilde{\Delta}_i \mid 1\leq i \leq s\}$ is another
nef-partition of $\Conv(\cup_{i=1}^s \Delta_i)$ (see
Definition \ref{NefPartitionDef}).

\begin{remark}
One cannot exhaust $\emph{all}$ the nef-partitions of length $s$ of
$\Conv(\cup_{i=1}^s \Delta_i)$ using the above process (i.e. first
construct  reflexive Gorenstein cone $K,K^\vee$, then decompose
$\deg^\vee=\sum_{i=1}^s \tilde{e}_i$, and finally construct
$\tilde{\Delta}_i$). However, the above process will give exactly
the combinatorial data for toric multiple mirrors (see the details in
Theorem \ref{toric multiple mirror}).
\end{remark}

Next, we give the geometric meaning of this construction.

\medskip

Let $X(\Sigma(\Delta))$ be the toric variety defined by the fan
\begin{equation}\label{fan Delta}
\Sigma(\Delta):=\{{\bf 0}\}\cup\{\Rr_{\geq 0}\theta \mid \theta \subset
\Delta) {\ \rm is\  a\ face}\},
\end{equation} where $\Delta$ is defined in \eqref{eq: Delta}. Let
\[\Ll_i= \sum_{\rho \in \Ver(\Delta_i)\backslash \{0\}}
D_{\rho},\qquad \tilde{\Ll}_i= \sum_{\rho \in
\Ver(\tilde{\Delta}_i)\backslash \{0\}} D_{\rho}\] be the Weil
divisors corresponding to $\{\Delta_i \}, \{\tilde{\Delta}_i\}$
respectively, where $D_\rho$ is the torus invariant divisor
associated to the primitive element $\rho$. Then $\Ll_i, \tilde \Ll_i$ are nef Cartier divisors thanks to the nef-partition data (see \cite{Borisov93} Definition 2.5).
\medskip

In the sequel, we use $\chi^m \in \Cc[M]$ to denote the monomial associated to the lattice point $m\in M$. For example, if $m=(m_1, \cdots, m_d)$, then $\chi^m = \prod_{i=1}^d x^{m_i}_i$.

\begin{proposition}{\label{linearly equivalent of diviosrs}}
The nef-partition $\{\tilde{\Delta}_i \mid 1 \leq i \leq s\}$ of
$\Conv\left(\cup_{i=1}^s \Delta_i\right)$ is obtained from the
reflexive Gorenstein cone via the above procedure if and only if the corresponding divisors
$\{\tilde{\Ll}_i \mid 1 \leq i \leq s\}$ and $\{\Ll_i \mid 1 \leq i
\leq s\}$ are pairwise linearly equivalent.
\end{proposition}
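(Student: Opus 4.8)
The plan is to pass through the toric dictionary, so that the linear-equivalence statement on divisors becomes a statement about lattice translates of the dual polytopes $\nabla_i$, and then to match those translates with the combinatorial data $\deg^\vee=\sum_i\tilde e_i$ that defines the process.

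First I would record the two standard translations. On $X(\Sigma)$, whose fan lies in $M_\Rr$ with rays over the vertices of $P:=\Conv(\cup_{i}\Delta_i)$, the divisor $\Ll_i=\sum_{\rho\in\Ver(\Delta_i)\setminus\{0\}}D_\rho$ is the nef divisor whose polytope of sections is $\nabla_i$: using that the nonzero vertices of the $\Delta_i$ partition $\Ver(P)$, the polytope $\{n\in N_\Rr\mid\langle u_\rho,n\rangle\ge -a_\rho\}$ attached to $\Ll_i$ (with $a_\rho=1$ on $\Ver(\Delta_i)\setminus\{0\}$ and $a_\rho=0$ otherwise) is cut out by $\min\langle\Delta_i,n\rangle\ge-1$ and $\min\langle\Delta_k,n\rangle\ge 0$ for $k\ne i$, which is exactly $\nabla_i$; the same computation gives $\tilde\nabla_i$ as the section polytope of $\tilde\Ll_i$, where $\{\tilde\nabla_i\}$ denotes the nef-partition dual to $\{\tilde\Delta_i\}$. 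Second, I would invoke the standard fact that two torus-invariant divisors on a complete toric variety are linearly equivalent if and only if their section polytopes differ by a lattice translation. Together these reduce the proposition to the assertion that $\{\tilde\Delta_i\}$ arises from the process if and only if $\tilde\nabla_i=\nabla_i-p_i$ for some $p_i\in N$, for every $i$.

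The heart of the argument is to match the process with such translations. After the relabelling already made in the text, the admissible decompositions $\deg^\vee=\sum_i\tilde e_i$ with $0\ne\tilde e_i\in K^\vee\cap\overline N$ and $\tilde e_i=(0,\dots,1,\dots,0;p_i)$ (the $1$ in the $i$-th slot) are exactly the choices of lattice points $p_i\in\nabla_i\cap N$ with $\sum_i p_i=0$: membership $\tilde e_i\in K^\vee=\{(b;\sum_j b_j\nabla_j)\mid b_j\ge 0\}$ with $b=(0,\dots,1,\dots,0)$ forces $p_i\in\nabla_i$, and $\deg^\vee=(1,\dots,1;0)$ forces $\sum_i p_i=0$. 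The key step is then to show that the process attached to such a choice outputs the nef-partition whose dual is $\tilde\nabla_i=\nabla_i-p_i$. For this I would compute the projection $\pi\colon\overline N\to\overline N/\spa_\Zz\{\tilde e_1,\dots,\tilde e_s\}\cong N$ that is dual to the isomorphism $\phi$ of Section~\ref{nef-partition}; a direct pairing computation gives $\pi(b_1,\dots,b_s;z)=z-\sum_j b_j p_j$. Applying $\pi$ to the slice $\{b=(0,\dots,1,\dots,0)\}\cap K^\vee$, whose $N$-coordinate sweeps out $\nabla_i$, yields precisely $\nabla_i-p_i$; this is the dual-side mirror of the identity $\tilde\Delta_i=p(\tilde S_i)$ and so identifies $\tilde\nabla_i=\nabla_i-p_i$.

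With this in hand both directions follow. If $\{\tilde\Delta_i\}$ comes from the process, then $\tilde\nabla_i=\nabla_i-p_i$, hence $\tilde\Ll_i$ is linearly equivalent to $\Ll_i$ for every $i$. Conversely, if $\tilde\Ll_i\sim\Ll_i$ for all $i$, then $\tilde\nabla_i=\nabla_i-p_i$ for some $p_i\in N$; since $0\in\tilde\nabla_i$ we get $p_i\in\nabla_i$, and since $(\sum_i\tilde\nabla_i)^\vee=\Conv(\cup_i\tilde\Delta_i)=\Conv(\cup_i\Delta_i)=(\sum_i\nabla_i)^\vee$ we get $\sum_i\tilde\nabla_i=\sum_i\nabla_i$ and hence $\sum_i p_i=0$; thus $\tilde e_i=(0,\dots,1,\dots,0;p_i)$ is a legitimate decomposition which, by the key step, reproduces $\{\tilde\nabla_i\}$ and therefore $\{\tilde\Delta_i\}$, duality being an involution. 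The main obstacle is precisely the key step of the previous paragraph — verifying that the process really outputs $\nabla_i-p_i$, i.e. establishing the dual-side analogue of $\tilde\Delta_i=p(\tilde S_i)$ — along with the combinatorial book-keeping justifying that the nonzero vertices of the $\Delta_i$ partition $\Ver(P)$, which underlies the identification of $\nabla_i$ with the section polytope of $\Ll_i$.
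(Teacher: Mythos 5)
Your proposal is correct, but it takes a genuinely different route from the paper's. The paper disposes of both directions by a local, ray-by-ray divisor computation: writing $p_i=\tilde{e}_i-e_i\in N$, it checks that $\tilde{\Ll}_i-\Ll_i$ is exactly the principal divisor $(X^{p_i})$ on $X(\Sigma)$ --- the coefficient at a ray $v\in\Ver(\Conv(\cup_i\Delta_i))$ is $\langle v,p_i\rangle$, which equals $1$, $-1$ or $0$ according as $v$ is a nonzero vertex of $\tilde{\Delta}_i$ only, of $\Delta_i$ only, or of both or neither, via the dictionary ``$v\in\Ver(\tilde{\Delta}_i)\setminus\{0\}$ iff its lift $(e_j;v)\in S$ pairs to $1$ with $\tilde{e}_i$'' --- and conversely it sets $\tilde{e}_i:=e_i+(0;p_i)$ and verifies admissibility. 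You instead argue globally through section polytopes: identifying the section polytope of $\Ll_i$ (resp.\ $\tilde{\Ll}_i$) with $\nabla_i$ (resp.\ $\tilde{\nabla}_i$), reducing linear equivalence to lattice translation, and proving the key identity $\tilde{\nabla}_i=\nabla_i-p_i$ via the projection $\pi(b;z)=z-\sum_j b_jp_j$ dual to $\phi$. The step you flag as the main obstacle does go through: the map $\Theta(x)=(\langle x,\tilde{e}_1\rangle,\dots,\langle x,\tilde{e}_s\rangle;p(x))$ is a lattice isomorphism carrying $K$ onto the cone that Proposition~\ref{neftocone} attaches to $\{\tilde{\Delta}_i\}$, its inverse dual is exactly your $\pi$ (keeping the unit first coordinates), and it carries the slice $\{b=e_j\}\cap K^\vee$, with $N$-part $\nabla_j$, onto the corresponding slice of the symmetric description of the dual cone, whose $N$-part is $\tilde{\nabla}_j$; alternatively, $\nabla_j-p_j\subseteq\tilde{\nabla}_j$ is the one-line estimate $\langle z,n-p_j\rangle\geq-\delta_{ij}$ for $z\in\tilde{\Delta}_i$, $n\in\nabla_j$, and equality follows from $\sum_j(\nabla_j-p_j)=\sum_j\tilde{\nabla}_j$ by comparing support functions. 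Two small caveats: your ``standard fact'' that torus-invariant divisors are linearly equivalent iff their section polytopes are translates holds only for nef (basepoint-free) divisors --- false in general, but harmless here since $\Ll_i,\tilde{\Ll}_i$ are nef; and the partition of $\Ver(\Conv(\cup_i\Delta_i))$ by the nonzero vertices of the $\Delta_i$, on which both you and the paper's very definition of $\Ll_i$ rely, is indeed standard for nef-partitions (a vertex lying in two parts $\Delta_j,\Delta_k$ would force, against a vertex $y=\sum_i y_i$ of the dual polytope with $\langle v,y\rangle=-1$, the contradictory values $\langle v,y_j\rangle=-1$ and $\langle v,y_j\rangle=0$). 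Net comparison: the paper's check is shorter and purely local; your route proves more --- admissible decompositions $\deg^\vee=\sum_i\tilde{e}_i$ are exactly the choices of lattice points $p_i\in\nabla_i$ with $\sum_i p_i=0$, and the process acts on the dual nef-partition by the translations $\nabla_i\mapsto\nabla_i-p_i$ --- which is precisely what makes your converse direction transparent.
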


\begin{proof}
Suppose $\deg^\vee = \sum_{i=1}^s \tilde{e}_i = \sum_{i=1}^s e_i$.
Without loss of generality, we can assume $\tilde{e}_i - e_i = p_i
\in N$ as in \eqref{eq: e_i}, \eqref{eq: tilde e_i}. Then one can check that $\tilde{\Ll}_i - \Ll_i$ is exactly
the principle divisor $(\chi^{p_i})$ on $X(\Sigma)$.

\medskip

On the other hand, suppose $\tilde{\Ll}_i, \Ll_i$ are linearly
equivalent divisors for each $i$, then there exists $p_i \in N$ such
that $\tilde{\Ll}_i - \Ll_i = (\chi^{p_i})$. One can check that
$\tilde{e}_i = e_i + ({\bf 0};p_i)$ satisfies the requirement.
\end{proof}

We will prove the birationality for the $\Delta$-regular complete
intersections associated to the above nef-partitions.


\section{The main question}\label{sec3}

\subsection{The main question and its motivation}\label{sec3.1}

After establishing the relation between reflexive Gorenstein cones
and nef-partitions, we are ready to state the question asked
in~\cite{BN08} more explicitly.

\medskip

Recall that in the setting of $(\dagger)$ of Section \ref{nef-partition}, we had shown how to associate a nef-partition $\{\tilde{\Delta}_i \mid 1 \leq i \leq s\}$ to the decomposition $\deg^\vee = \sum_{i=1}^{s}
\tilde{e}_i$.

\medskip

Whenever one has a polytope, there is a family of Laurent
polynomials associated to it. Let $l(\Delta_i)$ be the set of
lattice points in $\Delta_i$, then the family of Laurent polynomials
associated to $\Delta_i$ is
\begin{equation}\label{eq: Delta}
LP(\Delta_i):=\{f_i = \sum_{v \in l(\Delta_i)} c_v \chi^{v} \in \Cc[M]\mid c_v \in \Cc \},
\end{equation}
where $c_v \in \Cc$ is a complex coefficient associated to the vertex $v$. In
the same fashion, $\tilde{\Delta}_j$ produces a family of Laurent
polynomials
\begin{equation}\label{eq: tilde Delta}
LP(\tilde \Delta_j):=\{\tilde{f}_j = \sum_{v \in l(\tilde{\Delta}_j)} c_v \chi^{v} \in \Cc[M]\mid c_v \in \Cc \}.
\end{equation}

\begin{remark}\label{re: nef coefficient}
Given $s$ Laurent polynomials $f_i \in LP(\Delta_i), 1\leq i \leq s$, there is a natural way to identify them with another $s$ Laurent polynomials $\tilde f_j \in LP(\tilde \Delta_j), 1\leq j \leq s$. In fact, for nef-partitions, there exists the relation
\[\Ver(\Conv(\bigcup_{i=1}^s \Delta_i)) = \bigsqcup_{i=1}^s (\Ver(\Delta_i)\backslash \{\0\}) = \bigsqcup_{i=1}^s (\Ver(\tilde \Delta_i)\backslash \{\0\}).
\] We simply require that the coefficients $c_v$ are the \emph{same} for the \emph{same} non-origin vertex $v$. However, there is an indeterminacy to identify the coefficients associated to the origin ${\bf 0}$ (i.e. the constant terms). The way to identify them is not clear on the level of nef-partitions because all the origins are ``clustered'', it will be clear on the level of reflexive Gorenstein cones (c.f. Remark \ref{re: cone coefficient}) where the vertices corresponding to the origin are ``spread''.
\end{remark}

By saying a Laurent polynomial $f_i = \sum_{v \in l(\Delta_i)} c_v \chi^{v}$ has
\emph{general coefficients}, we mean that the coefficients $c_v$ are
chosen from a nonempty Zariski open subvariety of $\Cc^{\#{l(\Delta_i)}}$.

\medskip

The geometric meaning of $LP(\Delta_i)$ can be explained in terms of toric geometry. For the dual nef-partition $\{\nabla_i \mid 1 \leq
i \leq s\}$, we define
\[
\nabla:=\Conv(\bigcup_{i=1}^s \nabla_i)= (\sum_{i=1}^s
\Delta_i)^\vee
\] by \eqref{eq: relation between nef-partition and its dual}. Let 
\begin{equation}\label{eq: Sigma(nabla)}
\Sigma(\nabla)=\{\0\} \cup \{ \Rr_{\geq 0}\theta \mid \theta
\text{ is a face of } \nabla\}
\end{equation} be a fan similar to \eqref{fan Delta}, and $X(\Sigma(\nabla))$
be the toric variety defined by $\Sigma(\nabla)$ (it is the same variety as the projective toric variety
associated to the polytope $(\sum_{i=1}^s \Delta_i)$). We have nef torus
invariant Cartier divisors
\begin{equation}\label{eq: sheaf G}
{\mathcal{G}}_i = \sum_{\rho \in \Ver({\nabla_i})\backslash\{\0\}} D_\rho, \quad 1 \leq i \leq s. 
\end{equation} Notice that, in contrary to \eqref{fan Delta}, we are working in its dual lattice here.

\medskip

One can identify the global sections of $\mathcal{G}_i$ with Laurent
polynomials associated to $\Delta_i$ \cite{ToricVariety}:
\[
H^0(X(\Sigma(\nabla)), \mathcal{G}_i) \cong \{ \sum_{v \in l(\Delta_i)} c_v
\chi^v \mid c_v \in \Cc \} = LP(\Delta_i).
\]
Let $f_i \in  LP(\Delta_i)$, and let $V_{f_i}$
be the zero locus of $f_i$ with respect to $\mathcal{G}_i$ on $X(\Sigma(\nabla))$. Then
\begin{equation}\label{eq: overline X}
\{ \overline{X_{(\Delta_i)}} =\bigcap_{i=1}^s V_{f_i} \mid f_i \in H^0(X(\Sigma(\nabla)),
\mathcal{G}_i) \}
\end{equation} is a family of subschemes of $X(\Sigma(\nabla))$ parameterized
by the coefficients of $f_i, 1 \leq i \leq s$. In the Batyrev-Borisov construction, the mirror pairs are taken to be certain crepant partial desingularizations of $\overline{X_{(\Delta_i)}}$ and $\overline{X_{(\nabla_i)}}$. However, because we
focus on the birationality of multiple mirrors, it is enough and more convenient  to restrict it to the big torus.

\medskip

To be precise, let
$X_{(\Delta_i)} \subset (\Cc^{*})^d$ be
\[
X_{(\Delta_i)}:=\{  f_1 = f_2 =\cdots =f_s =0\}.
\]
Similarly, let $X_{(\tilde{\Delta}_i)} \subset (\Cc^{*})^d $ be
\[
X_{(\tilde{\Delta}_i)}:=\{ \tilde{f_1} = \tilde{f_2} =\cdots
=\tilde{f_s} =0\}.
\] Moreover, $\{f_i \mid 1 \leq i \leq s\}$ and $\{\tilde f_j \mid 1 \leq j \leq s\}$ are always identified as in Remark \ref{re: nef coefficient}. This gives an 1-1 correspondence between $\{X_{(\Delta_i)}\}$ and $\{X_{(\tilde\Delta_i)}\}$. In the sequel, we will always work under this identification, and write them as $X_{(\Delta_i)}$ and $X_{(\tilde\Delta_i)}$ without explicitly mentioning it.

\medskip

The following question was asked by Batyrev and Nill in \cite{BN08}
(Question 5.2):
\smallskip

({\textbf{Nef-partition version}})

\begin{quote}
Are the Calabi-Yau complete intersections $X_{(\Delta_i)}$ and
$X_{(\tilde{\Delta}_i)}$ birational to each other?
\end{quote}

\medskip

The physics relation between $X_{(\Delta_i)}$ and
$X_{(\tilde{\Delta}_i)}$ can be visualized as follows (c.f. Theorem \ref{toric multiple mirror}):
\begin{equation}\label{eq: fig multiple mirrors}
\xymatrix{
X_{(\Delta_i)} \ar@{<->}[dd]_{\rm multiple~ mirrors} \ar@{~>}[rrd]^{\rm ~mirror} \\
&&\qquad X_{(\nabla_i)} = X_{(\tilde \nabla_i)}\\
X_{(\tilde{\Delta}_i)} \ar@{~>}[rru]_{\rm ~mirror}
}
\end{equation}

\begin{remark}
Batyrev and Nill's question asks about the birationality of $X_{(\Delta_i)}$ and
$X_{(\tilde{\Delta}_i)}$ under the identification of coefficients as in Remark \ref{re: nef coefficient}. In general, neither two general elements $X''_{(\Delta_i)}, X''_{(\Delta_i)}$ in the same family $\{X_{(\Delta_i)}\}$ associated to a fixed nef-partition, nor a mirror pair $X_{(\Delta_i)}, X_{(\nabla_i)}$ are birational.
\end{remark}

We can reformulate this question in terms of reflexive Gorenstein
cones as well.

\medskip

Recall that in \eqref{eq: some cones of K_1}, we defined nonempty lattice polytopes
\begin{equation}\label{eq: R}
\begin{split}
&{R} = \{x \in K \mid \langle x ,\deg^\vee \rangle = 1\},\\
&{R}_i=\{v \in K \mid \langle v,\deg^\vee \rangle = \langle v, 
{e}_i \rangle =1\}.
\end{split}
\end{equation} 
Because $\deg^\vee = \sum_{i=1}^s
{e}_i$, for each lattice point $v$ in ${R}$, there exists a unique $i$, such that $\langle v,
{e}_i \rangle =1$. We have a disjoint union
$l({R})=\bigsqcup_{i=1}^s l({R}_i)$. One can define a family of
Laurent polynomials in $\Cc[\overline{M}]$ by setting:
\begin{equation}\label{eq: Laurent poly}
LP(e_i):=\{{g}_i = \sum_{v \in l({R}_i)} c_v \chi^{v}\mid c_v \in  \Cc\}.
\end{equation}
For any lattice point $ w_i$ such that $\langle  w_i, {e}_j
\rangle =\delta_{ij}$, $
\chi^{- w_i} \cdot {g}_i$ is a Laurent polynomial in
$\Cc[\Ann({e}_1,\dots,{e}_r)]$. We can similarly define
a family of intersections  $ X_{({e}_i)} \subset (\Cc^*)^d =
\spec(\Cc[\Ann({e}_1,\dots, {e}_s)])$ by
\begin{equation}\label{eq: X_e}
X_{({e}_i)}:=\{\chi^{- w_1} \cdot {g}_1= \chi^{- w_2} \cdot
{g}_2=\cdots=\chi^{- w_s} \cdot {g}_s=0\}
\end{equation}
This family does not depend on the choice of $ w_i$, because
any other choice will differ by a factor $\chi^{ w},  w \in
\Cc[\Ann({e}_1,\dots, {e}_s)] $ and this will not affect
the zero loci defined in $(\Cc^*)^d$.

\medskip

Similarly, we can construct $\tilde R_i$ and $LP(\tilde e_i):=\{\tilde g_i = \sum_{v \in l({\tilde R}_i)} c_v \chi^{v}\mid c_v \in  \Cc\}$ associated to the
decomposition $\deg^\vee = \sum_{i=1}^r \tilde e_i$. The corresponding family of intersections $X_{(\tilde e_i)} \subset (\Cc^*)^d = \spec(\Cc[\Ann({\tilde e}_1,\dots,{\tilde e}_s)])$ are given by
\begin{equation}\label{eq: tilde X_e}
X_{(\tilde e_i)}:=\{\chi^{-\tilde w_1}\cdot {\tilde g}_1=\chi^{-\tilde w_2}\cdot {\tilde g}_2=\cdots=
\chi^{-\tilde w_s}\cdot {\tilde g}_s=0\},
\end{equation} where $\tilde w_i$ satisfies $\langle \tilde w_i, \tilde e_j \rangle = \delta_{ij}$.

\medskip

We can compare these defining equations with those associated to the above nef-partitions. Because $ ({R}_i -\deg)$ is sent to $ {\Delta}_i$ (c.f. Section \ref{nef-partition}), we can identify
$ {g}_i \in \Cc[\overline M]$ with $f_i
\in \Cc[M]$ up to a factor $\chi^{m_i},m_i \in M$. Hence,
$X_{({e}_i)}$ and $X_{({\Delta}_i)}$ are indeed isomorphic
varieties. The same holds for $X_{(\tilde e_i)}$ and
$X_{(\tilde \Delta_i)}$ as well.

\begin{remark}\label{re: cone coefficient}
Similar to Remark \ref{re: nef coefficient}, there is an 1-1 correspondence between $\{g_i\in LP(e_i) \mid 1 \leq i \leq s\}$ and $\{\tilde g_i \in LP(\tilde e_i) \mid 1 \leq i \leq s\}$ by requiring that the coefficients $c_v$ corresponding to the lattice $v$ are the same in all the Laurent polynomials (provided $c_v\chi^v$ is a monomial of it). In fact, because $\bigsqcup_{i=1}^s l(R_i) = \bigsqcup_{i=1}^s l(\tilde R_i)$, if $c_v \chi^v$ is a monomial in $g_i$, then there must exist a unique $j$, such that $\tilde g_j$ also contains $c_v \chi^v$ as a monomial. Moreover, there is no indeterminacy to assign coefficients of the origin as in the nef-partition case.

\medskip

In the following, we only work under this identification, and write them as $X_{(e_i)}$ and $X_{(\tilde e_i)}$ without explicit mentioning it.
\end{remark}

\begin{theorem}\label{toric multiple mirror}
The complete intersections $X_{(\tilde{e}_i)}$ and $X_{(e_i)}$ are
toric multiple mirrors in the sense that they both mirror to the same
family.
\end{theorem}

\begin{proof}
By the toric mirror construction in \cite{Batyrev94, BB94},
the mirror of $X_{(\tilde{e}_i)}$ (or more precisely, a certain crepant partial desingularization of its compactification) is a family of generic complete
intersections defined by divisors $\{\tilde{\Ll}_i | 1 \leq i \leq
s\}$ in the toric variety $X(\Sigma(\Delta))$ (see \eqref{fan Delta}). Likewise, the
mirror of $X_{(e_i)}$ is a family of generic complete intersections
defined by divisors $\{\Ll_i | 1 \leq i \leq s\}$ in $X(\Sigma(\Delta))$. By
Proposition \ref{linearly equivalent of diviosrs}, $\{\tilde{\Ll}_i
| 1 \leq i \leq s\}$ and $\{\Ll_i | 1 \leq i \leq s\}$ consist of
pairwise linearly equivalent divisors. As a result, they define the
same family of complete intersections which is the mirror of both
$X_{(\tilde{e}_i)}$ and $ X_{(e_i)}$.
\end{proof}

Viewing the original question from this perspective (c.f. \eqref{eq: fig multiple mirrors}), we can ask:
\smallskip

(\textbf{Reflexive Gorenstein cone version})
\begin{quote}
Are the toric multiple mirrors $X_{(e_i)}, X_{(\tilde{e}_i)}$
birational?
\end{quote}

\medskip

We give an affirmative answer to this question in Theorem \ref{main}
under some mild technical assumptions.

\subsection{Example}

In this section, we will illustrate the basic idea of the proof by
an explicit example.

\medskip

Let $\{u_1, \dots, u_{15}\}$ be a basis of $\Zz^{15}$, and
consider the sublattice $\overline M_1 \subset \Zz^{15}$ defined as
\[
\overline M_1:= \{\sum_{i=1}^{15} l_i u_i \in \Zz^{15} \mid \sum_{i=1}^5 l_i
=\sum_{i=6}^{10} l_i =\sum_{i=11}^{15} l_i  \}.
\]
The rank of $\overline M_1$ is $13$, it contains a cone $K=\Zz_{\geq 0}^{15}
\cap \overline M_1$ which is defined by non negativity of all $l_i$. The $125$
generators of rays of $K$ are given by $u_{i_1} + u_{i_2} + u_{i_3}$
with $5j-4 \leq i_j \leq 5j$, and let $c_{ijk} \in \Cc$ denote
coefficients. Suppose $\{v_1,\dots,v_{15}\}$ is the dual basis of
$\{u_1,\dots,u_{15}\}$, then the dual lattice $\overline N_1$ is the
quotient of $\Zz^{15}$:
\[
\overline N_1 = \Zz^{15}/\spa_\Zz \{\sum_{i=1}^5v_i
-\sum_{i=6}^{10}v_i,\sum_{i=1}^5v_i -\sum_{i=11}^{15}v_i\}.
\]
The dual cone $K^\vee$ is the image of $\Zz_{\geq 0}^{15}$ in
$\overline N_1$, and its rays are generated by $v_i, 1 \leq i \leq 15$. The
degree elements $\deg, \deg^{\vee}$ are given by  $\sum_{i=1}^{15}
u_i$ and $\sum_{i=1}^5 v_i$ respectively.

\medskip

There are three different maximal ways of decomposing $\deg^{\vee}$ as a
summation of lattice points in $K^{\vee}$:
\[
\deg^{\vee} = \sum_{i=1}^{5} v_i,\quad \deg^{\vee} = \sum_{i=6}^{10}
v_i,\quad\deg^\vee = \sum_{i=11}^{15} v_i.
\]
This gives three different complete intersections in $\Pp^4 \times
\Pp^4$.

\medskip

For $\deg^{\vee} = \sum_{i=1}^{5} v_i$, the defining equations associated to this
decomposition can be expressed as
\begin{equation}\label{eq: 5 eqs}
\begin{split}
\sum_{1 \leq j, k \leq 5} c_{1jk} ~x_1 y_j z_k &=0\\
\sum_{1 \leq j, k \leq 5} c_{2jk} ~x_2 y_j z_k &=0\\
&\vdots\\
\sum_{1 \leq j, k \leq 5} c_{5jk} ~x_5 y_j z_k &=0\quad.
\end{split}
\end{equation}
Here $[x_1:x_2:\cdots:x_5] $ are homogenous coordinates of $\Pp^4$,
and similarly for $y_j, z_k$.

\medskip

As explained before, we can multiply each equation in \eqref{eq: 5 eqs} by a factor in
order to make it well defined in $\overline M_1 \cap \Ann(v_1,\dots,v_5)$.
Hence, let
\[
f_i(y,z) = x_i^{-1} \sum_{1 \leq j, k \leq 5} c_{ijk}~ x_i y_j z_k =
\sum_{1 \leq j, k \leq 5} c_{ijk}~ y_j z_k=0, \quad 1 \leq i \leq 5.
\]
This can be viewed as five bidegree $(1,1)$ equations in $\Pp^4
\times \Pp^4$. Similarly, for $\deg^\vee = \sum_{i=6}^{10} v_i$ and
$\deg^\vee = \sum_{i=11}^{15} v_i$ we have defining equations:
\[
\begin{split}
g_j(x,z)&= \sum_{1 \leq i, k \leq 5} c_{ijk}~ x_i z_k=0, \quad 1
\leq j
\leq 5,\\
h_k(x,y) &= \sum_{1 \leq i,j \leq 5} c_{ijk}~ x_iy_j =0, \quad 1
\leq k\leq 5.
\end{split}
\]
Our question thus becomes whether these three complete intersections
are birational for general choice of $c_{ijk}$.

\medskip

Let $X_1$ be the variety defined by $f_i = 0, 1 \leq i \leq 5$. Let
$A_{1}(z)$ be the $5 \times 5$ matrix
\[
A_{1}(z)= \left( \sum_{ k =1}^5 c_{ijk}z_k \right)_{ij}\quad,\quad 1
\leq i, j \leq 5
\]
then $f_i =0, 1 \leq i \leq 5$ can be written as a matrix equation
\[
A_{1}(z) \left(\begin{array}{c}
y_1\\
\vdots\\
y_5
\end{array}\right)=0.
\]
Because $[y_1: y_2: \cdots :y_5] \in \Pp^4$, one must have $\det
(A_1(z)) =0$. Let $D_1$ be the variety defined by $\det(A_{1}(z)) =0
$ in $\Pp^4$. For general coefficients, one can directly argue that $X_1$ and
$D_1$ are birational.

\medskip

Analogously, the variety $X_2$ defined by $g_j = 0 , 1 \leq j \leq 5$
can be written as
\[
\left(x_1,\cdots,x_5\right) A_2(z) = 0,
\]
where
\[
A_2(z)=\left(\sum_{k=1}^5 c_{ijk} z_k\right)_{ij} \quad,\quad 1 \leq
i, j \leq 5.
\]
Let $D_2$ be the variety defined by $\det(A_2(z))=0$. Then $X_2$ and $D_2$ are also birational. On the
other hand, $D_1$ and $D_2$ are the same varieties, and hence
$X_1,X_2$ are birational.

\section{The main theorem}{\label{sec4}}

\subsection{The $s = 2$ case:  a baby version of the main theorem}

To orient the reader, we sketch a proof in this simplest case. On the one hand, we do not have to deal with those involved combinatorical situations in Section \ref{sec: decomposition of lattices}; on the other hand, all the essential ingredients have already appeared in this case.

\medskip

Suppose we are in the setting of ($\dagger$) and the index of the reflexive Gorenstein cone is $s=2$. 

\begin{theorem}[The $s=2$ case of the Theorem \ref{main}]\label{baby version of main}
Let $X_{(e_i)}, X_{(\tilde{e}_i)}$ be toric multiple mirrors associated to $\deg^\vee = e_1 + e_2 = \tilde e_1 + \tilde e_2$ as in \eqref{eq: X_e}, \eqref{eq: tilde X_e}. Let $D$ be the determinantal variety as in \eqref{eq: D in s=2 case}. If $X_{(e_i)},
X_{(\tilde{e}_i)}, D$ are irreducible, then $X_{(e_i)}$ and
$X_{(\tilde{e}_i)}$ are birational.
\end{theorem}

\begin{proof}[Sketch of the Proof]
As in ($\dagger$), we can assume that $e_1 = (1,0;\0), e_2 = (0,1; \0), \tilde e_1 = (1,0; p_1), \tilde e_2 = (0,2; p_2)$ with $p_i \in \nabla_i$ and $p_1 + p_2=\0$. The dimension of the vector space $\spec_\Rr\{e_1, e_2, \tilde e_1, \tilde e_2\}$ spanned by $e_i, \tilde e_j$ is either $2$ or $3$. When the dimension equals to $2$, then $p_i = 0$, and thus $e_i = \tilde e_i$. The result automatically holds. When the dimension equals to $3$, by the property of nef-partition \eqref{eq: achieve -1} there exists an element $m \in M$ such that $\langle m, p_1 \rangle =-1$. Put the lattice points of $\overline M$ as
\begin{equation}\label{eq: u,w}
u_1 : = (1,0; \0),\quad u_2:=(0,1; -m ), \quad w := (0,0; m) ,
\end{equation} then they satisfy 
\begin{equation}\label{eq: properties of u,w}
\begin{split}
&\langle u_1, e_1 \rangle = \langle u_1, \tilde e_1 \rangle = 1,   \langle u_1, e_2 \rangle = \langle u_1, \tilde e_2 \rangle =0,\\ 
&\langle u_2, e_2 \rangle = \langle u_2, \tilde e_1 \rangle =1, \langle u_2, e_1 \rangle = \langle u_2, \tilde e_2 \rangle = 0,\\ 
&\langle w, e_1 \rangle =\langle w, e_2 \rangle =0,  \langle w, \tilde e_1 \rangle = -1, \langle w, \tilde e_2 \rangle =1.
\end{split}
\end{equation}

For $1 \leq i, j \leq 2$, let
\begin{equation}\label{eq: polytope S_i,j}
S_{i,j} = \{v \in K \mid \langle v, \deg^\vee \rangle =1, \langle v, e_i \rangle = \langle v ,\tilde e_j \rangle = 1\}
\end{equation} be lattice polytopes. The polytopes $R_i, \tilde R_j$ defined in \eqref{eq: some cones of K_1} can be decomposed as disjoint unions
\begin{equation}\label{eq: disjoint union}
l(R_{i}) = l(S_{i,1}) \sqcup l(S_{i,2}) \ {\rm ~and~}\  l(\tilde R_{j}) = l(S_{1,j}) \sqcup l(S_{2,j}).
\end{equation}

The a Laurent polynomial $g_i \in LP(e_i)$ defined in \eqref{eq: Laurent poly} can be written as
\[
{g}_i = \sum_{v \in l({R}_i)} c_v \chi^{v} = g_{i,1} + g_{i,2},
\] with $g_{i,j} = \sum_{v \in l({S}_{i,j})} c_v \chi^{v}$. Likewise, $\tilde g_j \in LP(\tilde e_j)$ can be written as $\tilde g_j = g_{1,j}+ g_{2,j}$. 

\medskip

Let's consider the matrix
\begin{equation}\label{eq: 2 by 2 matrix before nomalization}
\left( \begin{array}{cc}
g_{1,1} & g_{1,2}\\
g_{2,1} & g_{2,2}\\
\end{array} \right).
\end{equation} A priori, the entries live in $\Cc[\overline M]$, however, we can normalize them to make the entries live in $\Cc[{\Ann}(e, \tilde e)]$. For example, using the lattice points $u_1,u_2, w$ defined in \eqref{eq: u,w}, each entry in the following matrix lives in $\Cc[{\Ann}(e, \tilde e)]$
\begin{equation}\label{eq: 2 by 2 matrix after normalization}
\left( \begin{array}{cc}
\chi^{-u_1}g_{1,1} & \chi^{-u_1-w}g_{1,2}\\
\chi^{-u_2}g_{2,1} & \chi^{-u_2-w}g_{2,2}\\
\end{array} \right).
\end{equation} We use $y$ to denote the coordinates of $\Cc[{\Ann}(e, \tilde e)]$ and thus denote the matrix \eqref{eq: 2 by 2 matrix after normalization} as $A(y)$.

\medskip

Let $\ww = (1, \chi^w)^t$ be a $2 \times 1$ matrix with $\chi^w$ to be the coordinate of $\spec(\Cc[\Zz w])$, then \[A(y) \ww = \left(\chi^{-u_1}(g_{1,1}+g_{1,2}), \chi^{-u_2}(g_{2,1}+ g_{2,2})\right)^t = \left(\chi^{-u_1}g_1, \chi^{-u_2}g_{2}\right)^t,\] with $\chi^{-u_i}g_i \in \Cc[\Ann(e)]$. Hence $X_{(e_i)} = \{\chi^{-u_1}g_1 = \chi^{-u_2}g_2 =0\}$ (c.f. \eqref{eq: X_e}) is the same as $\{A(y)\ww = 0\}$. Because $\ww \neq \0$, the determinant $\det A(y)$ must equal zero. Let its zero locus be
\begin{equation}\label{eq: D in s=2 case}
D: = \{\det A(y) = 0\} \subset \spec(\Cc[\Ann(e, \tilde e)]).
\end{equation} For general coefficients, $D$ is of dimension $(\rk M -2)$. Moreover, there exists the decomposition $\Ann(e) = \Ann(e, \tilde e) \oplus \Zz w$. The natural projection 
\begin{equation}\label{eq: pi in case s=2}
\begin{split}
\pi: \spec(\Cc[\Ann(e)]) &\to \spec(\Cc[\Ann(e, \tilde e)])\\
(y, \chi^{w}) &\mapsto y
\end{split}
\end{equation} maps $X_{(e_i)}$ to $D$. The fibre over point the $y \in D$ is cut out by one linear equation in $\spec(\Cc[\Zz w]) \cong \Cc^*$. Hence it consists of either a single closed point or the whole $\spec(\Cc[\Zz w])$. Explicitly, if $\chi_1^w, \chi_2^w$ are the closed points on the fibre, for any $\lambda \in  \Cc$, such that $\lambda \chi_1^w + (1-\lambda) \chi_2^w$ is non-zero, then it also lies on the fibre. If the generic fibre had dimension $1$, then there exists a Zariski open set $V \subset \pi(X_{(e)})$, such that 
\begin{equation}\label{eq: birational}
\pi^{-1}(V) \to V \times \Cc^*
\end{equation} is birational. This will lead to a contradiction, and we leave the details to the proof of Theorem \ref{main}. Hence, 
$\pi$ is generically injective. 

\medskip

We have $\dim X_{(e)} = \dim D = \rk M - 2$, thus $X_{(e)}$ is birational to $D$ given both of them are irreducible. In the same fashion, we can show that $X_{(\tilde e)}$ is birational to $D$ under the irreducibility assumption. Eventually, $X_{(e)}, X_{(\tilde e)}$ are birational. 
\end{proof}

\begin{remark}\label{rmk: irreducibility}
Comparing with Theorem \ref{main}, we do not have to assume $\dim D$ equals $\dim X_{(e_i)}$ and $\dim X_{(\tilde e_i)}$ in Theorem \ref{baby version of main}. This is because in the $s=2$ case, $D$ is cut out by one equation and must have the expected dimension. However, we do have to assume $D$ to be irreducible. For example, when $S_{2,1} = \emptyset$, $D$
is the union of $\{\chi^{-u_1}g_{1,1}=0\}$ and $\{\chi^{-u_2-w}g_{2,2}=0\}$. By the proof of the
theorem, we see that for $(y, \chi^{w}) \in X_{(e_i)}$, if $(\chi^{-u_2-w}g_{2,2})(y) \neq 0$, then $\chi^w$ has to be zero. This leads to a contradiction, and thus $(\chi^{-u_2-w}g_{2,2})(y) = 0$. In particular, this shows that $X_{(e)}$ maps to the irreducible subvariety $\{\chi^{-u_2-w}g_{2,2} = 0\} \subset \spec(\Cc[\Ann(e, \tilde e)])$. By the above argument, we see that they are birational. A similar argument shows that $X_{(\tilde e)}$ is birational to $\{\chi^{-u_1}g_{1,1} = 0\}$.  A priori, the two loci are not expected to be birational.  Unfortunately, we do not have such examples on hand. To find a minimal set for such examples, one may have to go through the list of four dimensional reflexive polytopes and check all their length $2$ nef-partitions. Besides, it is meaningful to find such examples because in combination with the main result of \cite{FK14}, this will give derived equivalent but non-birational Calabi-Yau varieties.
\end{remark}

\subsection{Results on the decomposition of lattices}\label{sec: decomposition of lattices}
In order to work in the general setting, we need to consider various possibilities which do not appear in the length $2$ case. 
Let $\Delta$ be a reflexive polytope, $\{\Delta_i \mid 1\leq i \leq
s\}$ be a nef-partition of $\Delta$, and $\{\nabla_i \mid 1\leq i
\leq s\}$ be its dual nef-partition. In the following, we assume
$\dim \Delta= \dim M_\Rr$. Because $\Delta \subset \sum_{i=1}^{s}
\Delta_i$, we have $\dim (\sum_{i=1}^{s} \Delta_i ) = \dim
M_\Rr$.

\begin{lemma}\label{decomposition}
Let $p_i \in \nabla_i$, if $\sum_{i=1}^{s} p_i = 0$, and
\[\dim(\operatorname{Span}_\Rr \{ p_1,\dots,p_s\})=s-r,\] then there
exist disjoint sets $I_k, 1\leq k\leq r$, such that
$\bigsqcup_{k=1}^{r} I_k = \{1,\dots,s\}$ and for each $k$, we have
$\sum_{i \in I_k} p_i =0$.
\end{lemma}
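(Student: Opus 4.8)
The plan is to translate the statement into a structural claim about the space of linear relations among the $p_i$ and then exploit the nef-partition inequalities. Let $P\colon \Rr^s \to N_\Rr$ be the linear map $P(a)=\sum_{i=1}^s a_i p_i$, and set $R=\ker P$. Since $\operatorname{im} P = \spa_\Rr\{p_1,\dots,p_s\}$ has dimension $s-r$, rank-nullity gives $\dim R = r$; moreover $\mathbf{1}=(1,\dots,1)\in R$ because $\sum_i p_i = 0$. The lemma is then equivalent to showing that $R$ has a basis consisting of the indicator vectors $\mathbf{1}_{I_1},\dots,\mathbf{1}_{I_r}$ of a partition $\coprod_{k=1}^r I_k=\{1,\dots,s\}$, since the relation $\mathbf{1}_{I_k}\in R$ is precisely $\sum_{i\in I_k}p_i=0$ and there are $r=\dim R$ of them. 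Thus it suffices to prove that $R$ is spanned by indicator vectors of disjoint subsets.

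The key step, and the only place the geometry enters, is to show that $R$ is closed under taking coordinatewise positive parts. Fix $a\in R$, put $A^+=\{i:a_i>0\}$, $A^-=\{i:a_i<0\}$, and $q=\sum_{i\in A^+}a_i p_i=\sum_{i\in A^-}(-a_i)p_i$. I claim $q=0$. From $\min\langle \Delta_i,\nabla_j\rangle\ge -\delta_{ij}$ we get $\langle x,p_j\rangle\ge 0$ whenever $x\in\Delta_i$ and $i\neq j$. Take any $j$ and any $x\in\Delta_j$: if $j\notin A^+$, the expression $q=\sum_{i\in A^+}a_i p_i$ involves only indices $i\neq j$, so $\langle x,q\rangle=\sum_{i\in A^+}a_i\langle x,p_i\rangle\ge 0$; if $j\in A^+$, the expression $q=\sum_{i\in A^-}(-a_i)p_i$ involves only indices $i\neq j$, again giving $\langle x,q\rangle\ge 0$. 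Hence $\langle x,q\rangle\ge 0$ for every $x\in\cup_j\Delta_j$, and therefore for every $x\in\Delta=\Conv(\cup_j\Delta_j)$ by convexity. As $\Delta$ is reflexive with $0$ in its interior and $\dim\Delta=\dim M_\Rr$, any linear functional nonnegative on all of $\Delta$ must vanish, so $q=0$. This yields $a^+\in R$ and $a^-\in R$, i.e. $R$ is closed under coordinatewise positive part.

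It remains to extract the partition, which is pure linear algebra. A linear subspace of $\Rr^s$ closed under coordinatewise positive part is automatically closed under coordinatewise $\vee$ and $\wedge$, so $R$ is a vector sublattice containing the constants. Level sets of elements then stay in $R$: clamping $a\in R$ to an interval via $(a\vee c'\mathbf{1})\wedge c\mathbf{1}$ and subtracting a multiple of $\mathbf{1}$ produces $\mathbf{1}_{\{a_i\ge c\}}\in R$, and differencing consecutive thresholds yields $\mathbf{1}_{\{a_i=c\}}\in R$. Consequently every $a\in R$ is a linear combination of indicator vectors lying in $R$, so $\mathcal{B}=\{I:\mathbf{1}_I\in R\}$ spans $R$; being closed under $\cap$, $\cup$, and set difference, $\mathcal{B}$ is a Boolean algebra on $\{1,\dots,s\}$ whose atoms $I_1,\dots,I_t$ partition the index set. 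Then $R=\spa_\Rr\{\mathbf{1}_{I_1},\dots,\mathbf{1}_{I_t}\}$, so $t=\dim R=r$, and each atom satisfies $\sum_{i\in I_k}p_i=0$, as required.

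I expect the crux to be the closure step: recognizing that the one-sided nef-partition inequality forces $\langle x,q\rangle\ge 0$ on the entire reflexive polytope $\Delta$, whence $q=0$. Once closure under positive parts is established, the passage from the relation space to an honest partition is routine lattice theory.
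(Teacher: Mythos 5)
Your proof is correct, and it organizes the argument genuinely differently from the paper's. The paper proceeds by a maximality argument: it takes the largest number $l$ of disjoint zero-sum blocks, and, assuming $l<r$, picks a relation $\sum_i a_ip_i=0$ independent of the block relations, adds large multiples of the known relations to manufacture a nonnegative relation $\sum_i b_ip_i=0$ whose support $S$ cuts the block $I_1$ properly, shows $\sum_{i\in S}p_i=0$, and thereby splits $I_1$, contradicting maximality. You instead study the full relation space $R=\ker\left(a\mapsto\sum_i a_ip_i\right)$ and prove once and for all that it is closed under coordinatewise positive part, after which the partition is extracted by routine vector-lattice and Boolean-algebra bookkeeping, with the count of parts coming from $t=\dim R=r$. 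The geometric core is the same in both proofs: a mixed-sign relation yields an element (your $q$, the paper's $P$) expressible with nonnegative coefficients in two complementary ways, the off-diagonal nef-partition inequality $\min\langle\Delta_i,\nabla_j\rangle\geq-\delta_{ij}$ forces $q$ to pair nonnegatively with every $\Delta_j$, and full-dimensionality kills $q$ --- you finish via $0$ being interior to the reflexive polytope $\Delta=\Conv(\cup_j\Delta_j)$, while the paper uses $M_\Rr=\sum_i\Rr_{\geq 0}\Delta_i$; these are two equivalent uses of the same hypothesis $\dim\Delta=\dim M_\Rr$. What your route buys: the geometry enters exactly once, the paper's delicate choices of large constants $C$ and $c$ and the reindexing are replaced by standard lattice manipulations, and you prove the stronger structural fact that $R$ is spanned by indicators of a partition --- which also delivers for free the paper's remark following the lemma that $\dim\spa_\Rr\{p_i\mid i\in I_k\}=\#(I_k)-1$, since any relation supported in an atom $I_k$ must, by your level-set argument, be a multiple of $\mathbf{1}_{I_k}$. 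What the paper's route buys is a self-contained refinement procedure that avoids the vector-lattice formalism. One presentational note: your clamping step $(a\vee c'\mathbf{1})\wedge c\mathbf{1}$ implicitly requires $c'$ chosen strictly between $c$ and the largest coordinate value of $a$ below $c$; this is easy but worth stating explicitly in a final write-up.
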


\begin{proof}
Suppose $l$ is the maximum number such that there exists $l$
nonempty disjoint sets $I_j, 1 \leq j \leq l$ satisfying
\[I_1 \sqcup \cdots \sqcup I_l = \{1,\cdots,s\}\] and $\sum_{i \in I_j } p_i =0$ for all $j$.

\medskip

Because these $l$ equations are linearly independent, we have $$s-r
= \dim(\operatorname{Span}_\Rr \{ p_1,\dots,p_s\}) \leq s-l,$$ and
hence $l \leq r$. All we need to show is $l=r$.

\medskip

Suppose $l < r$, then there must exist at least one
equation $\sum_{ 1 \leq i  \leq s}a_i p_i=0$, which is not a linear
combination of $\sum_{i \in I_j} p_i = 0$. Hence, there must exist
an index $j$, such that for $i \in I_j, a_i$ are not identically the
same. Suppose $a_m$  is a minimal element in $\{a_i \mid i \in
I_j\}$. After reindexing the set, we can assume $j=1$ and $m=1$. Let
$C$ be a sufficiently large number, then
\[
 0=\sum_{1\leq i \leq s}a_i p_i - a_1\sum_{i\in I_1}p_i + C \cdot \sum_{i \in I_2 \sqcup \cdots \sqcup I_l }p_i = \sum_{2 \leq i \leq s}b_ip_i
\]
satisfies $b_i >0$ when $i \in I_2 \sqcup \cdots \sqcup I_l$, and
$b_i \geq 0$ when $ i \in I_1$. Moreover, there exists at least one
element $t \in I_1$ such that $b_t >0$ (because $a_i$ are not
identically the same for $i \in I_1$). Let $S=\{ i \mid b_i \neq
0\}$ be the index set corresponding to nonzero coefficients.

\medskip

Set $P = \sum_{i \in S}p_i = \sum_{i \in S}(1 - cb_i)p_i$ with $c$
sufficiently big such that for all $i, (1-cb_i) <0$. When $k \notin
S$, by \eqref{eq: multiple of nef-partitions} we have
\[
\begin{split}
&\langle \Delta_k, \sum_{i \in S}p_i \rangle \geq 0,\\
&\langle \Delta_k, \sum_{i \in S}(1-cb_i)p_i \rangle \leq 0.
\end{split}
\]
Hence $\langle \Delta_k , P \rangle = 0$ for $k \notin S$.

\medskip

In the following, we will show $P=0$. Otherwise, there exists $v \in
M_\Rr$ such that $\langle v , P \rangle >0$. Because $M_\Rr =
\sum_{i=1}^s \Rr_{\geq 0} \Delta_i$, we can choose $v = \sum_{1 \leq
i \leq s} v_i$ with $v_i \in \Delta_i$. Then we have
\[
 \langle v , P \rangle = \langle \sum_{i \in S} v_i + \sum_{i \notin S} v_i, P\rangle = \sum_{i \in S}
\langle v_i, -\sum_{j \notin S}p_j \rangle + \sum_{i \notin S}
\langle v_i,P \rangle.
\]
We use the assumption $\sum_{j=1}^s p_j =0$, and thus $P= - \sum_{j
\notin S}p_j$ in the second equation. However, $\sum_{i \in S}
\langle v_i, -\sum_{j \notin S}p_j \rangle  \leq 0$, and $\sum_{i
\notin S} \langle v_i,P \rangle = 0$ because $\langle \Delta_k , P
\rangle = 0$ for $k \notin S$. This contradiction implies $P=\sum_{i
\in S}p_i =0$.

\medskip

Now, as $I_1 \cap S \neq \emptyset$ and $I_1 \not\subseteq S$, the
index set $I_1':=I_1 \cap S$ must satisfy $\emptyset \subsetneqq
I_1' \subsetneqq I_1$. Since $I_2 \sqcup \cdots \sqcup I_l \subset
S$, we have
\[
 \sum_{j \in I_1'}p_j = P- \sum_{i \in I_2 \sqcup \cdots \sqcup I_l}p_i = 0.
\] But this implies \[\sum_{j \in I_1'}p_j = \sum_{j \in I_1 \backslash I_1'}p_j = 0\] which gives a further decomposition of $I_1$. This contradicts to the maximality of $l$.
\end{proof}

\begin{remark}
Using the notation of the lemma, we observe that for each $k$,
$\dim(\operatorname{Span}_\Rr\{p_i \mid i\in I_k\}) = \# (I_k) -1$.
\end{remark}

Another important fact for $\{p_i \mid 1 \leq i \leq s\}$ is that
their $\Zz$-span form a saturated sublattice in $N$, that is, the abelian group
$N/\left(\sum_{k=1}^s \Zz p_{i}\right)$ is torsion free. The
following combinatorial proof was suggested by Borisov.

\begin{lemma}\label{saturatedness}
The sublattice $\sum_{i=1}^s \Zz p_{i} \subset N$ is saturated.
\end{lemma}

\begin{proof}
Suppose otherwise, there exists $n=\sum_{i=1}^s a_i p_i$ with $a_i
\in \Qq$ such that $n \in N$ but $n \not\in \sum_{i=1}^s \Zz p_{i}$.
Furthermore, we can assume that for all $i, 0 \leq a_i <1$.

\medskip

Recall that $p_i \in \Delta_i$, hence $a_i p_i \in
\Delta_i$. By the property of nef-partition, we have
\[
n \in \sum_{i=1}^s  \Delta_i =  (\Conv(\bigcup_{i=1}^s
\nabla_i))^\vee.
\] If $n \neq \0$, then there exists a lattice point $m \in \Conv(\bigcup_{i=1}^s \nabla_i)$
such that $-1 \leq \langle n, m \rangle <0$. Because $n$ is a
lattice point, we have $\langle n, m \rangle = -1$.

\medskip

On the other hand, the set $\{ m \in  \Conv(\bigcup_{i=1}^s
\nabla_i) \mid \langle n, m \rangle =-1\}$ must contain some
vertices of $\Conv(\bigcup_{i=1}^s \nabla_i)$ and hence some
vertices of $\nabla_i$ because $\{\nabla_i \mid 1 \leq i \leq s\}$ is also a nef-partition. Without loss of
generality, we can assume $m \in \nabla_k$. By \eqref{eq: multiple of nef-partitions}, we have
\[
-1 = \langle n, m \rangle = \sum_{i=1}^s a_i \langle p_i, m \rangle
\geq -a_k > -1,
\] which is a contradiction. Thus $n = \0$, but this contradicts our
initial assumption on $n \not\in \sum_{i=1}^s \Zz p_{i} $.
\end{proof}

These results can be applied to ($\dagger$) where we have
\[ \tilde{e}_i =
(\underbrace{0,\dots,1,\dots,0}_{\text{1 at the i-th
position}};p_i), ~ p_i \in N\cap\nabla_i.\]
Moreover, the dimensions satisfy
\[\dim(\spa_{\Rr}\{e_1,\dots,e_s,\tilde{e}_1,\dots,\tilde{e}_s\}) = s
+ \dim(\spa_{\Rr}\{p_1,\dots,p_s\}).\] If
$\dim(\spa_{\Rr}\{p_1,\dots,p_s\})=s-r$, by
Lemma~\ref{decomposition} there exist disjoint index sets $I_k, 1
\leq k \leq r$, such that $\bigsqcup_{k=1}^{r} I_k = \{1,\dots,s\}$.
For each $k$, we have $\sum_{i \in I_k} p_i =0$, with
$\dim(\operatorname{Span}_\Rr\{p_i \mid i\in I_k\}) = \# (I_k) -1$. From now on, we set $n_k = \#(I_k)$.

\medskip

For our convenience, we use a superscript $(-)^{(k)}$ to reindex the
index set $I_k$ and the corresponding elements. For
example
\begin{equation}
\sum_{i \in I_k} p_i=0 
\end{equation}  
becomes
\begin{equation}
\sum_{i=1}^{n_k}
p^{(k)}_i=0
\end{equation} under the new notation.

\medskip

Because $\dim (\spa_{\Rr}\{p^{(k)}_1,\dots,p^{(k)}_{n_k}\})= n_k -1$, we
can choose
\[\{p^{(1)}_{2},\dots,p^{(1)}_{n_{1}},\dots,p^{(r)}_{2},\dots,p^{(r)}_{n_{r}}\}\] as a
$\Rr$-linearly independent set. 

\medskip

For simplicity, we denote $\Ann(e_1,\dots,e_s)$ and  $\Ann(e_1,\dots,e_s,\tilde{e}_1,\dots,\tilde{e}_s)$ by $\Ann(e)$ and $\Ann(e, \tilde e)$ respectively. 
Then $\Ann(e)$ is a sublattice of $ \overline{M}$
with the same rank as $M$, and
$\Ann(e, \tilde e)$ a sublattice of $ \overline{M}$ with rank $\rk (M)+r-s$.

\begin{lemma}\label{decomposelattice}
The lattice $\Ann(e) \subset \overline{M}$ can be decomposed as
\begin{equation}\label{eq: decomposition of Ann(e)}
\begin{split}
\Ann(e)=&\Ann(e,\tilde{e}) \\
&\oplus \Zz[w^{(1)}_{2}]\oplus \dots \oplus \Zz[w^{(1)}_{n_{1}}] \\
&\oplus \cdots \\
&\oplus \Zz[w^{(r)}_{2}] \oplus \dots \oplus \Zz[w^{(r)}_{n_r}],
\end{split}
\end{equation}
where $w^{(k)}_{i} \in \overline{M}$ satisfies the following requirements
(notice that by our indexing, $w^{(k)}_{i}$ starts from $w^{(k)}_{2}$):
\begin{enumerate}
\item $\langle w^{(k)}_{i}, \tilde{e}^{(k)}_{1}\rangle =-1,~ \langle w^{(k)}_{i}, \tilde{e}^{(k)}_{i}\rangle
=1$ for $i \geq 2$.
\item $\langle w^{(k)}_{i},\tilde{e}^{(l)}_{j} \rangle =0$ for all $\tilde{e}^{(l)}_{j} \neq \tilde{e}^{(k)}_{1}, \tilde{e}^{(k)}_{i}$.
\item $\langle w^{(k)}_{i}, e^{(l)}_{j}\rangle =0$ for all $e^{(l)}_{j}$.
\end{enumerate}

\end{lemma}

\begin{proof}
First,  if we already have $w_{ki}$ satisfying the given properties,
then by definition, we have
\[
\Ann(e,\tilde{e}) \oplus \Zz[w^{(1)}_{2}]\oplus \dots \oplus
\Zz[w^{(1)}_{n_{1}}] \oplus \dots \oplus \Zz[w^{(r)}_{2}] \oplus \dots \oplus
\Zz[w^{(r)}_{n_r}] \subset \Ann(e)
\]
as a sublattice. On the other hand, for any $m \in \Ann(e)$, we set
\[m- \sum_{k=1}^r \sum_{ i = 2}^{n_k}\langle m, \tilde{e}^{(k)}_{i}\rangle w^{(k)}_{i},\]
then by definition, one can check
\[
\begin{split}
&m- \sum_{k=1}^r \sum_{ i = 2}^{n_k}\langle m, \tilde{e}^{(k)}_{i}\rangle w^{(k)}_{i}\\
 &\in \Ann(e) \cap \Ann(\tilde{e}^{(1)}_{2},\cdots,\tilde{e}^{(1)}_{n_1},\cdots,\tilde{e}^{(r)}_{2},\cdots,\tilde{e}^{(r)}_{n_r}).\end{split}
\] By Lemma \ref{decomposition}, $\sum_{i=1}^{n_k}e^{(k)}_i = \sum_{i=1}^{n_k} \tilde{e}^{(k)}_i $ for all $k$, then 
\[m- \sum_{k=1}^r \sum_{ i = 2}^{n_k}\langle m, \tilde{e}^{(k)}_{i}\rangle w^{(k)}_{i} \in \Ann(e,\tilde{e}).
\] Thus, we only need to show the existence of $w^{(k)}_{i}$. 

\medskip

Let the lattice
map
\[
\theta : M \to \Zz^{s-r}
\]
be defined by
\[
m \mapsto (\langle m , p^{(1)}_{2}\rangle,\dots,\langle m ,
p^{(1)}_{n_1}\rangle,\dots,\langle m , p^{(r)}_{2}\rangle,\dots,\langle m ,
p^{(r)}_{n_{r}}\rangle).
\]
We claim that $\theta$ is a surjective lattice map. In fact, by the
saturatedness (Lemma \ref{saturatedness}),
\[
\{p^{(1)}_{2},\dots,p^{(1)}_{n_{1}},\dots,p^{(r)}_{2},\dots,p^{(r)}_{n_{r}}\}
\] forms
part of a $\Zz$-basis of $N$. Hence $\theta$ is surjective.

\medskip

We can choose $m$ such that $\langle m , p^{(i)}_{j} \rangle =0$ for all
$j \geq 2$ except $\langle m, p^{(k)}_{i} \rangle =1$, and set
\[w^{(k)}_{i} = (0,\dots,0;m) \in \overline{M},\]  then $w^{(k)}_{i}$ satisfies the required
properties.
\end{proof}

Now let
\begin{equation}\label{eq: L}
L = \spa_\Zz
\{w^{(1)}_{2},\cdots,w^{(1)}_{n_1},\cdots,w^{(r)}_{2},\cdots,w^{(r)}_{n_r}\} \subset
\overline{M}.
\end{equation}  We have $\Ann(e) =  \Ann(e, \tilde{e}) \oplus L$.

\medskip

Correspondingly, there exists a decomposition of tori:
\begin{equation}\label{eq: decomposition of toric}
\spec (\Cc[\Ann(e)]) = \spec (\Cc[\Ann(e,\tilde{e})]) \times
\spec(\Cc[L]).
\end{equation}

For any closed point in $\spec (\Cc[\Ann(e)])$ with coordinate $x$ ,
we will write $x=(y,\omega)$ with $y \in
\spec(\Cc[\Ann(e,\tilde{e})]),\ \omega \in \spec(\Cc[L])$
respectively.

\subsection{Construction of the determinantal variety}\label{determinantal
variety}

One main ingredient in the proof of previous theorem is  a
determinantal variety $D$ which serves as a bridge to connect two
complete intersections. We will show how to construct its analogue
in $\spec(\Cc[\Ann(e,\tilde{e})])$ which heavily relies on
Lemma \ref{decomposelattice}.

\medskip

Now we choose $u^{(k)}_{i} \in \overline{M}$ satisfying:
\begin{equation}\label{eq: def of u}
\begin{split}
&{\rm (1)~}\langle u^{(k)}_{i} , e^{(k)}_{i} \rangle = \langle u^{(k)}_{i},\tilde{e}^{(k)}_{1}
\rangle=1\\
&{\rm (2)~}\langle u^{(k)}_{i} , e^{(l)}_{j} \rangle =0  {\rm~for~ all~}  e^{(l)}_{j} \neq e^{(k)}_{i}\\
&{\rm (3)~}\langle u^{(k)}_{i} , \tilde{e}^{(l)}_{j} \rangle =0  {\rm~for~ all~}  \tilde{e}^{(l)}_{j} \neq \tilde{e}^{(k)}_{1}.
\end{split}
\end{equation}

We should point out that unlike those $w^{(k)}_{i}$ constructed before, $u^{(k)}_{i}$
starts from $u^{(k)}_{1}$ for each $k$. The existence of $u^{(k)}_{i}$ follows
from the same reasoning as  in Lemma \ref{decomposelattice}, and
we do not repeat it here.

\begin{lemma}\label{le: diagonal matrix}
The set $\{v \in K \mid \langle v,\deg^\vee
\rangle =1, \langle v,e^{(k)}_i\rangle = \langle v, \tilde{e}^{(l)}_j \rangle
=1\}$ is empty unless $k=l$. 
\end{lemma}
\begin{proof}
In fact, because of the relation $\sum_{i \in I_k} e_i=\sum_{i \in I_k}
\tilde{e}_i$ (Lemma \ref{decomposition}), for any $v$ such that $\langle v, \deg^\vee \rangle =1$, $\langle v ,\sum_{i \in
I_k} e_i \rangle =1 $ implies that $\langle v ,\sum_{i \in I_k}
\tilde{e}_i \rangle =1 $. Thus for any $l \neq k$, $\langle v, \tilde e^{(l)}_j \rangle = 0$. 
\end{proof}

Similar to \eqref{eq: polytope S_i,j}, we define lattice polytopes
\[S^{(k)}_{i,j} = \{v \in K \mid \langle v,\deg^\vee
\rangle =1, \langle v,e^{(k)}_i\rangle = \langle v, \tilde{e}^{(k)}_j \rangle
=1\}.\] Moreover, let 
\[\begin{split}
&R^{(k)}_i = \{v \in K \mid \langle v, \deg^\vee \rangle =1, \langle v, e^{(k)}_i \rangle =1\},\\
&\tilde R^{(k)}_j = \{v \in K \mid \langle v, \deg^\vee \rangle =1, \langle v, \tilde e^{(k)}_j \rangle =1\}.
\end{split}\]
Then by Lemma \ref{le: diagonal matrix}, we have disjoint
unions
\begin{equation}\label{eq: decompostion}
\begin{split}
&l(R^{(k)}_i)= \bigsqcup _{1 \leq j \leq n_k } l(S^{(k)}_{i,j}),\\
&l(\tilde R^{(k)}_j)= \bigsqcup _{1 \leq i \leq n_k } l(S^{(k)}_{i,j}).
\end{split}
\end{equation}

On the level of Laurent polynomials, we define
\[g^{(k)}_{i,j}=\sum_{v \in l(S^{(k)}_{i,j})} c_v \chi^v\] and
\[
g^{(k)}_i = \sum_{v \in l(R^{(k)}_i)} c_v \chi^v, \quad 
\tilde g^{(k)}_j = \sum_{v \in l(\tilde R^{(k)}_j)} c_v \chi^v
\]where $c_v \in \Cc$ are general coefficients as explained in Remark \ref{re: cone coefficient}. By \eqref{eq: decompostion}, we have
\begin{equation}\label{eq: decomposition of Laurent poly}
g^{(k)}_i = \sum_{1 \leq j \leq n_k} g^{(k)}_{i,j} \quad
{\rm and}\quad \tilde g^{(k)}_j = \sum_{1 \leq i \leq n_k} g^{(k)}_{i,j}.
\end{equation}

With these preliminaries, let $A^{(k)}(y)$ be the $n_k \times n_k$ matrix with entries in
$\Cc[M]$
\begin{equation}\label{eq: matrix A}
\begin{split}
A^{(k)}(y) =& \left( \begin{array}{cccc}
\chi^{-u^{(k)}_{1}} g^{(k)}_{1,1}& \chi^{-u^{(k)}_{1} - w^{(k)}_{2}} g^{(k)}_{1,2}&\cdots& \chi^{-u^{(k)}_{1} - w^{(k)}_{ n_k}} g^{(k)}_{1,n_k}\\
\chi^{-u^{(k)}_{2}} g^{(k)}_{2,1}& \chi^{-u^{(k)}_{2} - w^{(k)}_{2}} g^{(k)}_{2,2}&\cdots& \chi^{-u^{(k)}_{2} - w_{ n_k}} g^{(k)}_{2,n_k}\\
\vdots & \vdots & &\vdots \\
\chi^{-u^{(k)}_{n_k}} g^{(k)}_{n_k,1}& \chi^{-u^{(k)}_{n_k} - w^{(k)}_{2}} g^{(k)}_{n_k,2}&\cdots& \chi^{-u^{(k)}_{n_k} - w^{(k)}_{ n_k}} g^{(k)}_{n_k,n_k}\\
\end{array} \right)
\end{split}
\end{equation}

This is a generalization of the matrix \eqref{eq: 2 by 2 matrix after normalization}. Notice that the first column is not constructed identically as the rest. As one can directly verify, each entry of $A^{(k)}(y)$ lives in $\Cc[\Ann(e, \tilde e)]$, Hence by the
decomposition $\spec (\Cc[\Ann(e)]) = \spec
(\Cc[\Ann(e,\tilde{e})]) \times \spec (\Cc[L])$ (c.f.  \eqref{eq: decomposition of Ann(e)}), we use $y$ to
represent the corresponding coordinates in $\spec(
\Cc[\Ann(e,\tilde{e})])$.

\medskip

Next, we define a $ n_k \times 1$ matrix
\[
\ww^{(k)} = \left(1,\chi^{w^{(k)}_{2}},\cdots, \chi^{w^{(k)}_{n_{k}}}\right)^t,
\]and a $1 \times n_k$ matrix
\[
 \uu^{(k)} =\left(\chi^{u^{(k)}_{1}},\chi^{u^{(k)}_{2}},\dots,\chi^{u^{(k)}_{n_k}}\right).
\] By \eqref{eq: decomposition of Laurent poly}, the equation
\[
A^{(k)}(y)\cdot\ww_k = 0
\]
is exactly the same as
\[
\left( \begin{array}{c}

\chi^{-u^{(k)}_{1}}g^{(k)}_{1}\\
\vdots\\
\chi^{-u^{(k)}_{n_k}}g^{(k)}_{n_k}
\end{array}\right)=0.
\]

\medskip

By the same reason,
\[
\uu^{(k)} \cdot A^{(k)}(y) = 0
\]
is exactly the same as
\[
\left(\tilde g_1^{(k)}, \chi^{-w^{(k)}_{2}}\tilde{g}^{(k)}_{2},
\cdots,\chi^{-w^{(k)}_{n_k}}\tilde{g}^{(k)}_{n_k}\right) =0.
\]

Let \[D^{(k)} : = \{\det (A^{(k)}(y)) = 0\} \subset
\spec(\Cc[\Ann(e,\tilde{e})]),\] and $D$ be the intersection of $D^{(k)}$
\begin{equation}\label{eq: D}
D = \bigcap_{k=1}^r D^{(k)}
\end{equation}
with its reduced induced subscheme structure. This $D$ will be called \emph{determinantal variety} in the sequel. It will serve as
a bridge in the proof the birationality of two complete intersections.

\begin{remark}{\label{diagnoal not zero}}
We have $\det (A^{(k)}(y)) \not\equiv0$ because
$e^{(k)}_i \in S^{(k)}_{i,i}$. In fact, by choosing generic coefficients, these elements give nonzero summand in $\det (A^{(k)}(y))$. Hence, $D^{(k)}$ is a
hypersurface in $\spec (\Cc[\Ann(e,\tilde{e})])$ with dimension $\rk (M)
+r-s -1$.
\end{remark}

We state without proof the following lemma.

\begin{lemma}\label{birationality}
Let $f : X\to Y$ be a dominant morphism of varieties over $\mathbb
C$. Suppose $[K(X): K(Y)]=n$. Then there exists a dense open subset
$U$ of $Y$ such that $f^{-1}(y)$ consists of $n$ (distinct) points
for all $y\in U$. In particular, if $f$ is a dominant, generically injective morphism, then
$[K(X): K(Y)]=1$, and thus $X, Y$ are birational.
\end{lemma}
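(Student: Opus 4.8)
The plan is to reduce to the case of a finite morphism of affine domains and then exhibit the fibers explicitly via the primitive element theorem, exploiting that the ground field has characteristic zero. Since $[K(X):K(Y)]=n$ is finite, $f$ is generically finite and $\dim X = \dim Y$. Choosing a dense affine open $W \subseteq Y$ and a dense affine open $X_0$ of $f^{-1}(W)$ dominating $W$ does not change the function field extension, so I may assume $Y = \spec B$ and $X = \spec A$ with $B \subseteq A$ integral domains and $K(A)/K(B)$ of degree $n$; shrinking $Y$ further, I may assume $A$ is module-finite over $B$. To keep track of fibers, note that $Z := X \setminus X_0$ is a proper closed subset, so $\dim \overline{f(Z)} \leq \dim Z < \dim Y$; after removing $\overline{f(Z)}$ from $W$, the fiber $f^{-1}(y)$ in the original $X$ coincides with the fiber in $X_0$ for every remaining $y$.

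Next I would trivialize the extension. Because $\Cc$ has characteristic zero, $K(A)/K(B)$ is separable, so by the primitive element theorem $K(A) = K(B)(\alpha)$ for some $\alpha \in A$ with monic minimal polynomial $p(T)$ of degree $n$. After inverting a single nonzero element of $B$ I may assume $p(T) \in B[T]$, that $\mathrm{disc}(p)$ is invertible on $Y$ (possible since $p$ is separable, so $\mathrm{disc}(p) \neq 0$ in $K(B)$, and I remove its vanishing locus, a proper closed subset), and that $A = B[\alpha] = B[T]/(p(T))$ (the two subrings of $K(A)$ share its fraction field, hence agree after inverting some $g \in B$). Let $U$ denote the resulting dense open of $Y$.

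Finally I would read off the fibers. For a closed point $y \in U$ with residue field $\Cc$, the fiber is
\[
f^{-1}(y) = \spec\bigl(A \otimes_B \Cc\bigr) = \spec\bigl(\Cc[T]/(\overline{p}(T))\bigr),
\]
where $\overline{p}$ is the image of $p$ at $y$, a degree-$n$ polynomial whose discriminant is nonzero because $\mathrm{disc}(p)$ is invertible on $U$. Hence $\overline{p}$ has $n$ distinct roots in $\Cc$, and $f^{-1}(y)$ consists of exactly $n$ distinct reduced points, as claimed.

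The main obstacle is the bookkeeping in the first step: guaranteeing that the passage to affine charts and the successive shrinkings do not alter the number of points in the generic fiber over the original $Y$, which is exactly what the dimension estimate on $\overline{f(Z)}$ secures. Once the morphism is arranged so that $\mathrm{disc}(p)$ is invertible over a dense open — equivalently, once $f$ is finite \'etale there — the count is immediate. Conceptually this is the assertion that a dominant generically finite morphism in characteristic zero is finite \'etale over a dense open, which could alternatively be deduced from generic smoothness together with the relative dimension being $0$.
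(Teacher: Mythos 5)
Your proof is correct, but there is nothing in the paper to compare it against: the authors explicitly state Lemma~\ref{birationality} \emph{without} proof (``We state without proof the following lemma''), treating it as a standard fact. What you have written is the standard justification, and it is complete in all essentials: reduce to a module-finite extension of affine domains, use the primitive element theorem (valid since $\Cc$ has characteristic zero), and invert the discriminant so that $f$ becomes finite \'etale over a dense open, whence every fiber is $\spec\bigl(\Cc[T]/(\overline{p}(T))\bigr)$ with $\overline{p}$ separable of degree $n$. Your dimension estimate $\dim \overline{f(Z)} \leq \dim Z < \dim Y$ correctly disposes of the one genuinely delicate point, namely that passing to affine charts must not lose points of the fiber over the eventual open set $U$ (this uses irreducibility of $X$, which is implicit in the paper's use of $K(X)$). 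Two steps you compress are worth spelling out, though both are routine: first, placing the primitive element $\alpha$ inside $A$ uses module-finiteness, since then every element of $K(A)$ has the form $a/s$ with $a \in A$, $s \in B$, and scaling a primitive element by $s \in B$ keeps it primitive; second, the identification $A_g \cong B_g[T]/(p(T))$ requires knowing that $p$ generates the kernel of $T \mapsto \alpha$, which follows because $p$ is monic, so $B_g[T]/(p(T))$ is a free $B_g$-module surjecting onto $B_g[\alpha]$, and the surjection is injective because it becomes the isomorphism $K(B)[T]/(p(T)) \cong K(A)$ after tensoring with $K(B)$ and the source is torsion-free. With those two remarks made explicit, your argument is a complete proof of the lemma the paper leaves unproved.
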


\subsection{Proof of the main theorem}

In this section, we will show that under some mild assumptions, $X_{(e_i)}$ and
$X_{(\tilde{e}_i)}$ are both birational to the determinantal variety
$D$ (c.f. \eqref{eq: D}), and hence they are birational to each other. In fact, we will show that the
morphism $X_{(e_i)}$ to $D$ induced by the projection from $\spec
(\Cc[\Ann(e)])$ to $\spec(\Cc[\Ann(e,\tilde{e})])$ gives the
birational morphism, and similarly for $X_{(\tilde{e}_i)}$ to $D$.

\medskip

Our setup is the same as $(\dagger)$ in Section \ref{nef-partition}. The general complete intersections (i.e. the toric multiple mirrors, see Theorem \ref{toric multiple mirror})
 $X_{(e_i)}, X_{(\tilde{e}_i)}$ are defined in Section \eqref{eq: X_e}, \eqref{eq: tilde X_e} and the determinantal variety $D$ is defined in \eqref{eq: D}. We have the following main theorem.

\begin{theorem}\label{main}
Let $X_{(e_i)}, X_{(\tilde{e}_i)}$ be toric multiple mirrors,
and $D$ be the determinantal variety. If $X_{(e_i)},
X_{(\tilde{e}_i)}, D$ are irreducible with $\dim D=\dim
X_{(e_i)}=\dim X_{(\tilde{e}_i)}$, then $X_{(e_i)}$ and
$X_{(\tilde{e}_i)}$ are birational.
\end{theorem}

\begin{proof}
We use the same notation as above. When $s=1$, then $X_{(e_i)} = X_{(\tilde{e}_i)}$, so nothing needs
to be proved. Now we assume $s \geq 2$.

\medskip

By Lemma \ref{decomposelattice} and \eqref{eq: L}, we have
\[
\Ann(e)=\Ann(e,\tilde{e}) \oplus L.
\] For any closed point $x \in X_{(e_i)}$, we can write \[x=(y,\omega) \in \spec
(\Cc[\Ann(e)])\] with $y \in \spec(\Cc[\Ann(e,\tilde{e})])$ and
$\omega \in \spec (\Cc[L])$. We claim that there exists
a morphism $\pi$:
\[
\pi : X_{(e_i)} \to D
\] defined by $x \mapsto y$.

\medskip

Indeed, by
the construction of $A^{(k)}(y)$ (c.f.\eqref{eq: matrix A}), the following matrix equation
\begin{equation}\label{eq: matrix eq}
\begin{pmatrix}
A^{(1)}(y)&&&\\
&A^{(2)}(y)&&\\
&&\ddots&\\
&&&A^{(r)}(y)\\
\end{pmatrix}
\begin{pmatrix}
\ww^{(1)}\\
\ww^{(2)}\\
\vdots\\
\ww^{(r)}\\
\end{pmatrix}
=0
\end{equation}
gives the variety $X_{({e_i})}$, where $\ww^{(k)}
=\left(1,\chi^{w^{(k)}_{2}},\cdots, \chi^{w^{(k)}_{n_{k}}}\right)^t$.

\medskip

Hence, for a closed point $(y,\omega) \in X_{({e_i})}$ and for all $k$, $A^{(k)}(y)\ww^{(k)} = 0$. Because $\ww^{(k)}\not\equiv0$,
we must have $\det(A^{(k)}(y))=0$. Hence, $y$ lives in
$D^{(k)}$ for all $ k$, and thus $y \in D = \cap_{k=1}^r D^{(k)}$. This shows that the
natural projection $\spec (\Cc[\Ann(e)]) \to \spec (\Cc[\Ann(e,
\tilde{e})])$  maps $X_{(e_i)}$ to $D$. We denote this morphism by
$\pi$.

\medskip

Next, we show that $\pi$ is generically injective, that is, $\pi$ is
injective on a nonempty open subset of $X_{(e_i)}$. Roughly
speaking, the proof rests on the fact that a Calabi-Yau variety
cannot be uniruled. We show that if $\pi$ is not generically
injective, then $X_{(e_i)}$ is a uniruled variety. However, it has a natural compactification $\overline{{X}_{(e_i)}}$
which is a projective, Calabi-Yau variety with canonical Gorenstein
singularities. Putting these facts together, we get a contradiction.
The details are as follows:

\medskip

Suppose $\pi$ is not generically injective. By a theorem of
Chevalley (\cite{Hartshorne} Chapter II Ex.3.22(e)), there exists a
nonempty open set $V \subset \pi(X_{(e_i)})$ such that over $V$, the
fibres have the same dimension $h$. Let $ y \in V$, and let
$(X_{(e_i)})_y$ be the fibre over $y$. 

\medskip

For $(y,\omega) \in (X_{(e_i)})_y$, we can write \[\omega = (\chi^{-w^{(1)}_2}, \ldots, \chi^{-w^{(1)}_{n_k}},\ldots, \chi^{-w^{(r)}_2},\ldots, \chi^{-w^{(r)}_{n_r}}).\] Then by the matrix equation \eqref{eq: matrix eq}, for general coefficients,
\[
\Omega_y = \{\omega \in \spec(\Cc[L]) \mid (y,\omega)
\in (X_{(e_i)})_y\}
\] is cut out by $s$ linear equations (linear with respect to $\omega$)
\begin{equation}\label{eq: linear equations}
c^{(k)}_{i}(y) + \sum_{2 \leq j \leq n_k} c^{(k)}_{i,j}(y) \cdot \chi^{-w^{(k)}_j}=0, \quad 1 \leq k \leq r, 1 \leq i \leq n_k, 
\end{equation} where the coefficients are regular functions in $y$
\[\begin{split}
&c^{(k)}_{i}(y) = \chi^{-u^{(k)}_{1}} g^{(k)}_{i,1}, \quad 1\leq i \leq n_k,\\
&c^{(k)}_{i,j}(y) = \chi^{-u^{(k)}_{i} - w^{(k)}_{j}} g^{(k)}_{i,j}, \quad 1 \leq i \leq n_k, 2 \leq j \leq n_k.
\end{split}\]

As a result, $\Omega_y$ contains a dense open set of an affine space determined by equations \eqref{eq: linear equations}. Thus for any $y \in V$, $\Omega_y$ either consists a single closed point or of positive dimension $h$. If the later happens, by solving the system of equations \eqref{eq: linear equations}, there exists a (non-unique) birational embedding $\Omega_y \hookrightarrow \Cc^h$ which is well defined in a Zariski open neighborhood $V_1 \subset V$ of $y$. In other words, we define a birational morphism
\[
\pi^{-1}(V_1) \to  V_1 \times \Pp^h.
\] This shows that $X_{(e_i)}$ is a ruled variety and, in particular,
a uniruled variety.

\medskip

In Appendix \ref{app} (Theorem \ref{complete intersection is CY} and
Remark \ref{compatification}), we show that the natural compactification
$\overline{X_{(e_i)}}$ of $X_{(e_i)}$ is a projective, Calabi-Yau variety with
canonical Gorenstein singularities. Let $\widetilde{X_{(e_i)}}$ be
a resolution of singularities of $\overline{X_{(e_i)}}$. It is also a uniruled
variety. Because $\overline{X_{(e_i)}}$ is a Calabi-Yau variety with
canonical singularities, the canonical divisor
$\widetilde{K_{(e_i)}}$ of $\widetilde{X_{(e_i)}}$ is
\[
\widetilde{K_{(e_i)}}= \sum_j c_j E_j,\quad c_j \geq 0,
\]
where $E_j$ are the exceptional divisors. Hence,
$H^0(\widetilde{X_{(e_i)}}, \Oo(\widetilde{K_{(e_i)}})) \neq 0$.
However, because $\widetilde{X_{(e_i)}}$ is a smooth, proper
uniruled variety over $\Cc$, we have $H^0(\widetilde{X_{(e_i)}},
\Oo(\widetilde{K_{(e_i)}})) = 0$ (\cite{Kollar95} IV Corollary
1.11). This is a contradiction, and hence $\pi$ is generically
injective.

\medskip

Let $U \subset X_{(e_i)}$ be an open set where $\pi|_U$ is
injective. Because for general coefficients, $X_{(e_i)}$ is smooth
of dimension $d-s$ (Proposition~\ref{GenericDeltaRegular}), $\pi(U)$
is a constructible subset of $D$ with dimension $d-s$. This is the
same dimension as $D$ by assumption. Thus $\pi$ is a dominant morphism as well.
By Lemma~\ref{birationality}, $X_{(e_i)}$ is birational to $D$.

\medskip

Because the construction is symmetric, a similar
argument can be used to show that $X_{(\tilde{e}_i)}$ is birational
to $D$. We sketch the argument below:

\medskip

First, by the proof of Lemma~\ref{decomposelattice}, one has a
decomposition of lattices
\[
\begin{split}
\Ann(\tilde{e})=&\Ann(e,\tilde{e}) \\
&\oplus \Zz[u^{(1)}_{2}-u^{(1)}_{1}]\oplus \dots \oplus \Zz[u^{(1)}_{n_1}-u^{(1)}_{1}] \\
&\oplus \cdots \\
&\oplus \Zz[u^{(r)}_{2}-u^{(r)}_{1}] \oplus \dots \oplus \Zz[u^{(r)}_{n_r}-u^{(r)}_{1}],
\end{split}
\]
where $u^{(k)}_{i} \in \overline{M} $ is the same as \eqref{eq: def of u}. Then $u^{(k)}_{i}-u^{(k)}_{1}$ can be viewed as an analogue of $w^{(k)}_{i}$ in \eqref{eq: decomposition of Ann(e)} because it satisfies the required relations  of
Lemma \ref{decomposelattice} with $e^{(k)}_{i}, \tilde{e}^{(k)}_{i}$
switched. Correspondingly, we have a decomposition of
tori (c.f. \eqref{eq: decomposition of toric}):
\[
\spec(\Cc[\Ann(\tilde{e})]) = \spec(\Cc[\Ann(e,\tilde{e})]) \times
(\Cc^*)^{s-r}.
\]
The determinantal variety can be defined similarly in $\spec(\Cc[\Ann(e,\tilde{e})])$ and is the same as $D$. There also exists a morphism $X_{(\tilde{e}_i)} \to D$ as before, and for
the same reason, it is a birational morphism.

\medskip

Hence $X_{(e_i)}$ and $X_{(\tilde{e}_i)}$ are both birational to
$D$, and this completes the proof.
\end{proof}

\begin{remark}\label{require det}
As explained in Remark \ref{rmk: irreducibility}, the irreducibility of $D$ is crucial for the theorem and we cannot take it for granted. However,  $X_{(e_i)}$ and $X_{(\tilde{e}_i)}$ are irreducible in practice because in the mirror construction, $\overline{X_{(e_i)}}, \overline{X_{(\tilde e_i)}}$ are always assumed to be irreducible. In fact, if $X_{(e_i)}$ is not irreducible, then it is a disjoint union of its irreducible components because of the toroidal singularity, but this will contradict the irreducibility assumption on $\overline{X_{(\tilde e_i)}}$. Besides, Theorem 3.3 in \cite{BB94} provides a combinatorial criteria for when this can be checked.
\end{remark}

\begin{remark}
It is reasonable to require that $\dim D = \dim X_{(e_i)} = d-s$.
Indeed $D= \cap_{i=1}^r D_i$ is a variety in
$\spec(\Cc[\Ann(e,\tilde{e})])$\ $ \cong (\Cc^*)^{d-(s-r)}$ defined
by the intersection of $r$ hypersurfaces. Thus $D$ is expected to
have dimension $d-s$ for generic choice of coefficients.
\end{remark}

\section{An application to the Calabrese-Thomas's example}{\label{application}}

We explain how to use Theorem \ref{main} (in fact Theorem \ref{baby version of main} is enough) to prove
the birationality of the Calabi-Yau threefolds for the second
example given by Calabrese and Thomas in \cite{CT14}\footnote{Their first example is explained in the setting of Clifford double mirrors \cite{BL16}.}. Once this is established, we can get their result of
derived equivalence for such Calabi-Yau varieties readily from the
main result of Bridgeland (\cite{Bridgeland}). In the process, one
will see that although we have to have some extra assumptions on
the determinantal variety (see
Remark \ref{rmk: irreducibility}), they are quite easy to check
in practice once the nef-partitions are known. 

\medskip

We first recall the construction of this example
following~\cite{CT14}, leaving its motivation and the proof of derived
equivalence from the homological projective dual perspective to the
interested readers.

\medskip

Let $\pi: P \to \Pp^5$ be a blowup of a closed point $0$ in the
complex projective space $\Pp^5$, with exceptional divisor $E$. One
considers the divisor in the linear system of \[K_{P}^{-1/2}:=
\pi^*\Oo(3)(-2E).\] Picking a generic pencil 
\begin{equation}\label{eq: pencil}
\Pp^1 \subset
|\pi^*\Oo(3)(-2E)|
\end{equation} of such divisors, and let 
\[X \subset P\] be
the base locus (i.e. $X$ is the complete intersection of two general
elements of $|\pi^*\Oo(3)(-2E)|$). By the adjunction formula, $X$ is
a Calabi-Yau $3$-fold. Let $Y \subset \Pp^4 \times \Pp^1$ be a
complete intersection of general elements of $(2,1)$ and $(3,1)$
divisors in $\Pp^4 \times \Pp^1$, which is another Calabi-Yau
$3$-fold. In \cite{CT14}, Calabrese and Thomas showed that the
universal hypersurface associated to the pencil \eqref{eq: pencil} is in fact the blowup
of $Y$ inside $\Pp^4 \times \Pp^1$. They gave two semi-orthogonal
decompositions of the derived category of this universal hypersuface
involved with $D^b(X), D^b(Y)$ respectively. By a sequence of
mutations, they could prove the derived equivalence of $X$ and $Y$.

\medskip

We only need $X$ and $Y$ in the sequel. We will show that they
are exactly the toric multiple mirrors in the Batyrev-Borisov
construction.

\medskip

Let $M: = \Zz^6/\Zz(1,1,1,1,1,1)$ be a lattice of rank $5$ with
$\bar{v}_i$ the image of the standard coordinate $v_i$ in $M$. The
dual lattice is \[N: = \{(x_1,\dots, x_6) \in \Zz^6 \mid
\sum_{i=1}^6 x_i = 0\}.\] The blowup of $\Pp^5$ at a closed point
corresponds to the fan $\Sigma(\Delta)$,  where $\Delta =
\Conv(\bar{v}_1, \bar{v}_2 ,\cdots, \bar{v}_6, -\bar{v}_6 )$. If
$E_i$ denotes the torus invariant divisor corresponding to the primitive element
$\bar{v}_i, 1\leq i \leq 6$, and $E$ denotes the exceptional divisor
corresponding to $-\bar{v}_6$, then
\[\pi^*\Oo(3)(-2E) \sim E_1 +E_2 +E_3 +E \sim E_4 + E_5 + E_6. \]
Thus, we have one nef-partition of $\Delta$, in this case
\[
\Delta_1 = \Conv (0, \bar{v}_1, \bar{v}_2,\bar{v}_3,-\bar{v}_6),
\quad \Delta_2 = \Conv (0,\bar{v}_4,\bar{v}_5,\bar{v}_6).
\]
The Gorenstein cone of this nef-partition is \[K = \Rr_{\geq
0}(1,0;\Delta_1) + \Rr_{\geq 0}(0,1;\Delta_2) \subset \Rr^2 \oplus
M_{\Rr}.\] We can write the degree element $(1,1;\0) \in \Zz^2 \oplus
M$ in two ways:
\[
(1,1;\0)= e_1 + e_2 = \tilde{e}_1 + \tilde{e}_2
\] where
\[
\begin{split}
&e_1 = (1,0;\0), \quad e_2 = (0,1;\0)\\
&\tilde{e}_1 = (1,0;-\bar{v}_6), \quad \tilde{e}_2 = (0,1;
\bar{v}_6).
\end{split}
\] As explained before, we have multiple mirrors $X_{(e_i)},
X_{(\tilde{e}_i)}$ and $X_{(e_i)}=X$. Next, we will show that
$X_{(\tilde{e}_i)} = Y$ where $Y$ is defined above in Calabrese and
Thomas' example. We do this by passing the decomposition
$(1,1;\0)= \tilde{e}_1 + \tilde{e}_2$ to the corresponding
nef-partition $\{\tilde{\Delta}_1, \tilde{\Delta}_2\}$. As in
Section~\ref{sec2}, we use $\{\nabla_1, \nabla_2\},
\{\tilde{\nabla}_1, \tilde{\nabla}_2\}$ to denote the dual
nef-partitions of $\{\Delta_1, \Delta_2\}, \{\tilde{\Delta}_1,
\tilde{\Delta}_2\}$ respectively.

\medskip

Because the vertices of $\nabla_1$ are piecewise linear functions on
the fan $\Sigma(\Delta_1 \cup \Delta_2)$, choosing $-1$ if the
primitive ray comes from $\Delta_1$ and $0$ if the primitive ray
comes from $\Delta_2$~\cite{Borisov93}, we can find the vertices of
$\nabla_1$ in $N$:
\[
\begin{array}{lll}
(0,0,0,0,0,0),&(1,-1,-1,0,0,1),&(-1,1,-1,0,0,1), \\
(-1,-1,1,0,0,1),&(-1,-1,-1,2,0,1),&(-1,-1,-1,0,2,1), \\
(2,-1,-1,0,0,0),&(-1,2,-1,0,0,0), &(-1,-1,2,0,0,0),\\
(-1,-1,-1,3,0,0), &(-1,-1,-1,0,3,0).&
\end{array}
\] Similarly, we can find the vertices of $\nabla_2$:
\[
\begin{array}{lll}
(0,0,0,0,0,0), &(2,0,0,-1,-1,0), &(0,2,0,-1,-1,0),\\
(0,0,2,-1,-1,0), &(0,0,0,1,-1,0), &(0,0,0,-1,1,0),\\
(3,0,0,-1,-1,-1), &(0,3,0,-1,-1,-1), &(0,0,3,-1,-1,-1),\\
(0,0,0,2,-1,-1),& (0,0,0,-1,2,-1).&
\end{array}
\] Then, the dual Gorenstein cone is $K^\vee=\Rr_{\geq 0}(1,0;\nabla_1) + \Rr_{\geq 0}(0,1;\nabla_2)$.
We can use $\{\tilde{e}_1, \tilde{e}_2\}$ to obtain the vertices of
$\tilde{\nabla}_1$ :
\[
\begin{array}{lll}
(0,0,0,0,0,0), &(2,-1,-1,0,0,0),&(-1,2,-1,0,0,0),\\
(-1,-1,2,0,0,0),& (-1,-1,-1,3,0,0), &(-1,-1,-1,0,3,0),\\
(3,0,0,-1,-1,-1), &(0,3,0,-1,-1,-1), &(0,0,3,-1,-1,-1),\\
(0,0,0,2,-1,-1), &(0,0,0,-1,2,-1).&
\end{array}
\] And the vertices of $\tilde{\nabla}_2$ are:
\[
\begin{array}{lll}
(0,0,0,0,0,0), &(1,-1,-1,0,0,1),&(-1,1,-1,0,0,1),\\
(-1,-1,1,0,0,1), &(-1,-1,-1,2,0,1),&(-1,-1,-1,0,2,1),\\
(2,0,0,-1,-1,0),&(0,2,0,-1,-1,0), &(0,0,2,-1,-1,0),\\
(0,0,0,1,-1,0), &(0,0,0,-1,1,0).&
\end{array}
\]
From this we can find the vertices of $\tilde{\Delta}_1$:
\[
\begin{array}{ll}
\0, \quad t_1:= -\bar{v}_1  -\bar{v}_2  -\bar{v}_3  -\bar{v}_4  -\bar{v}_5, &  w_1 := -\bar{v}_1  -\bar{v}_2  -\bar{v}_4  -\bar{v}_5, \\
w_2:= -\bar{v}_1 -\bar{v}_3  -\bar{v}_4 -\bar{v}_5, & w_3:=
-\bar{v}_2 -\bar{v}_3  -\bar{v}_4  -\bar{v}_5.
\end{array}
\] And similarly, the vertices of $\tilde{\Delta}_2$ are:
\[
\begin{array}{ll}
\0, \quad t_2:=\bar{v}_1  +\bar{v}_2  +\bar{v}_3  +\bar{v}_4  +\bar{v}_5,&w_4: = \bar{v}_1  +\bar{v}_2  +\bar{v}_3  +\bar{v}_4  +
2\bar{v}_5,\\
 w_5:=\bar{v}_1  +\bar{v}_2  +\bar{v}_3  +2\bar{v}_4
+\bar{v}_5.&
\end{array}
\] We observe that $\{t_1, w_1, w_2, w_3, w_4\}$ forms a basis of $\Zz^5$,
and satisfy
\[t_1 + t_2 = 0, \quad \sum_{i=1}^4 w_i = -w_5. \]
Hence, the toric variety defined by the fan $\Sigma(\Conv(\tilde{\Delta}_1
\cup \tilde{\Delta}_2))$ is $\Pp^1 \times \Pp^4$. Moreover, the
divisor corresponding to $\tilde{\Delta}_1$ is a $(1, 3)$ divisor
and the divisor corresponding to $\tilde{\Delta}_2$ is a $(1, 2)$
divisor in $\Pp^1 \times \Pp^4$. The complete intersection defined
by this new nef-partition is exactly the $Y$ in Calabrese and
Thomas' example. Therefore, we have reconstructed the $X, Y$ by realizing
them as a toric multiple mirrors $X_{(e_i)}(= X),
~X_{(\tilde{e}_i)}(= Y)$.

\medskip

By Theorem \ref{main} (or Theorem \ref{baby version of main}), in order to show that $X,Y$ are birational, we
only need to check that the determinantal variety $D$ is irreducible and of dimension $3$.

\medskip

Observing that $\dim(\spa_\Rr \{e_1, e_2, \tilde{e}_1,
\tilde{e}_2\}) = 3$, and $S_{i,j} \neq \emptyset$ for any $1 \leq i,
j \leq 2$ (see \eqref{eq: polytope S_i,j} for notation), we can
conclude that $D$ satisfies these conditions. In fact,
$\dim(\spa_\Rr \{e_1, e_2, \tilde{e}_1, \tilde{e}_2\}) = 3$ implies
that $r=1$, hence we only use a single matrix to define $D$. Then $S_{i,j} \neq
\emptyset$ implies that the $(i,j)$-entry of this matrix \eqref{eq: 2 by 2 matrix after normalization} is
non-zero. Because the coefficients of each entry are chosen
generally, $D$ must be an irreducible hypersurface in $(\Cc^*)^4$.

\medskip

By Theorem \ref{main}, we conclude that $X, Y$ are birational (see also \cite{CT14} Proposition 4.3).

\medskip

After establishing the birationality, because $X, Y$ are
smooth Calabi-Yau 3-folds, we can conclude that they are derived
equivalent by Bridgeland's result (\cite{Bridgeland} Theorem 1.1).

\medskip

More generally, Batyrev and Nill conjectured that
\begin{conjecture}[\cite{BN08} Conjecture 5.3]
There exists an equivalence (of Fourier-Mukai type) between the
derived categories of coherent sheaves on the two Calabi-Yau complete
intersections $\overline{X_{(e_i)}}$ and
$\overline{X_{(\tilde{e}_i)}}$.
\end{conjecture}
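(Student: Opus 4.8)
The plan is to upgrade the birationality of Theorem~\ref{main} to a derived equivalence by exploiting the common determinantal variety $D$ from Section~\ref{sec4} as the source of a Fourier--Mukai kernel. The key structural observation is that $\overline{X_{(e_i)}}$ and $\overline{X_{(\tilde{e}_i)}}$ are not merely abstractly birational to $D$, but arise as the two natural resolutions attached to the matrices $A_k(y)$. Over the open locus of $D$ where each $A_k(y)$ has corank exactly one, the equation $A_k(y)\,\ww_k=0$ exhibits $\overline{X_{(e_i)}}$ as the projectivization of the right-kernel bundle of $A_k$, while $\uu_k\,A_k(y)=0$ exhibits $\overline{X_{(\tilde{e}_i)}}$ as the projectivization of the left kernel (equivalently, the cokernel). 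These are exactly the two small resolutions of the determinantal locus: over the corank-two stratum, which has codimension three in each $D_k$, the fibers become $\Pp^1$'s, and passing from one resolution to the other is a flop. Since the two compactifications are $K$-equivalent Calabi--Yau varieties, establishing the derived equivalence is precisely an instance of Kawamata's $K$-implies-$D$ conjecture~\cite{Kawamata02} in this flopping situation.

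First I would construct the correspondence explicitly. Form the fiber product $Z:=\overline{X_{(e_i)}}\times_D\overline{X_{(\tilde{e}_i)}}$ together with its two projections $\pi_{X}$ and $\pi_{\tilde{X}}$. Over the corank-one locus both projections are isomorphisms, so $Z$ contains a dense open subset realizing the birational identification of Theorem~\ref{main}; over the deeper strata, $Z$ records the flopping contraction that glues the two resolutions. The candidate kernel is the structure sheaf $\Oo_Z$, twisted by a line bundle pulled back from $D$ to reconcile the two tautological classes $\Oo(\ww)$ and $\Oo(\uu)$, and the candidate functor is $\Phi=R\pi_{\tilde{X}*}\bigl(L\pi_{X}^{*}(-)\otimes\Oo_Z\bigr)$.

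The heart of the argument is to prove that $\Phi$ is an equivalence, and the flop structure suggests three parallel routes. The most dimension-robust is the variation-of-GIT window method of Ballard--Favero--Katzarkov and Halpern-Leistner, applied to the natural $(\Cc^{*})^{r}$-action on the total space of the entries of the $A_k$: the Calabi--Yau (trivial anticanonical) condition forces the grade-restriction windows attached to the two phases to carry the same weights, and the resulting window functor is exactly $\Phi$ up to the twist above. Alternatively one may use the explicit mutation and window-shift functors of Donovan--Segal for flops in arbitrary dimension, or, when the complete intersections are threefolds, invoke the classical theorem of Bridgeland~\cite{Bridgeland} that a flop of smooth $3$-folds induces a derived equivalence (together with its toroidal extension~\cite{Kawamata03}). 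In each case one checks that $\Phi$ sends a spanning class to a spanning class and is fully faithful on a generator, and then Calabi--Yau duality promotes fully faithfulness to an equivalence.

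The main obstacle, and the reason this remains a conjecture rather than a theorem, is singularity control. By Proposition~\ref{complete intersection is CY} the compactifications $\overline{X_{(e_i)}}$ and $\overline{X_{(\tilde{e}_i)}}$ carry only canonical, Gorenstein singularities rather than being smooth, and the determinantal variety $D$ and its two resolutions inherit these singularities; the clean derived-equivalence-from-flop theorems above are stated for smooth (or only mildly singular) varieties. One must therefore either pass to crepant resolutions and verify that the flop lifts while preserving triviality of the canonical class, or reformulate the whole construction categorically in terms of categorical/noncommutative resolutions of the determinantal locus and establish the equivalence at that level. Verifying that the window subcategories remain preserved in the presence of the determinantal singularities—equivalently, that no extra discrepancies are introduced by the deep corank strata—is where the genuine difficulty lies, and it is this interaction between the determinantal singularities and the flop that the present methods do not yet resolve.
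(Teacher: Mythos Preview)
The paper does not prove this statement: it is explicitly presented as an open conjecture in Section~\ref{sec5} (``Open questions''), quoted from \cite{BN08}. The only argument the paper offers is the observation that $\overline{X_{(e_i)}}$ and $\overline{X_{(\tilde{e}_i)}}$ are birational Calabi--Yau varieties and hence $K$-equivalent, so Kawamata's conjecture \cite{Kawamata02} would predict $D$-equivalence; the paper then remarks that one might need to work with smooth DM stacks because of the singularities. There is therefore no proof in the paper to compare your proposal against.

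Your proposal is not a proof either, and you correctly say so: you outline a plausible strategy (interpret the two compactifications as the left- and right-kernel resolutions of the determinantal locus $D$, recognize the passage between them as a flop, and invoke VGIT windows / Bridgeland / Donovan--Segal), and then you identify the genuine obstruction, namely that the available flop-implies-equivalence theorems require smoothness or very mild singularities, whereas here one only has canonical Gorenstein singularities. That diagnosis is accurate and matches the paper's own caveat about passing to stacks. What you should be aware of is that several steps before the singularity issue are also not established: it is not shown in the paper that the maps $\pi$ and $\tilde\pi$ to $D$ are small resolutions (the paper only proves they are birational via a dimension and uniruledness argument), nor that the corank-$\geq 2$ locus has the expected codimension, nor that the fiber product $Z$ is the correct kernel support. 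So your sketch is a reasonable research outline, but it goes well beyond anything the paper claims, and the paper itself leaves the statement entirely open.
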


Favero and Kelly give an affirmative answer to the above conjecture for smooth DM-stacks associated to $\overline{X_{(e_i)}}$, $\overline{X_{(\tilde{e}_i)}}$ (\cite{FK14} Theorem 6.3). In particular, this provides another line of proof for the derived equivalence of the above example.

\section{Appendix: $\Delta$-regularity, singularities and Calabi-Yau varieties}\label{app}

Roughly speaking, $\Delta$-regularity is a condition on the
smoothness of stratifications with correct dimensions. In this
Appendix, we generalize the concept of
$\Delta$-regularity \cite{Batyrev93, Batyrev94} of a
hypersurface to an intersection of several hypersurfaces in toric
varieties. We will show that for general coefficients, the complete
intersections defined by a nef-partition are $\Delta$-regular. Under the $\Delta$-regular assumption, the
singularities of the complete intersection are inherited from the
ambient toric variety. Using these results, we will show that an
irreducible $\Delta$-regular complete intersection associated to a
nef-partition is a Calabi-Yau variety with canonical Gorenstein
singularities. This fact is used in the proof of Theorem \ref{main}
by showing that the morphism $\pi$ is generically injective.

\medskip

Let $\Sigma \subset N_\Rr$ be a fan, and $X(\Sigma)$ be the toric
variety defined by $\Sigma$. If $\sigma \in \Sigma$ is a cone, let
$T_\sigma$ be the torus corresponding to $\sigma$. Then we have the
following stratification:
\[
X(\Sigma) = \bigcup_{\sigma \in \Sigma} T_\sigma\ .
\]

\begin{definition}\label{regular}
Let $V_i, 1 \leq i \leq s,$ be hypersurfaces of $X(\Sigma)$, and let
$V = \bigcap_{i=1}^s V_i$ be the scheme-theoretic intersection. Then
$V$ is called \emph{$\Delta$-regular} if $V$ is equidimensional
and for all $\sigma \in \Sigma,~ T_{\sigma} \cap V $ is either empty
or smooth of codimension $s$ in $T_{\sigma}$.
\end{definition}

We use the name $\Delta$-regularity following
Batyrev~\cite{Batyrev93, Batyrev94}, where $\Delta$ is a
polytope, and the regularity is about a hypersurface defined by a
Laurent polynomial with Newton polytope inside $\Delta$.

\begin{remark}\label{other interpretation of delta regular}
The $\Delta$-regular condition requires the linear independence of
the cotangent spaces at a common intersection point. This takes care
of both smoothness and codimension.
\end{remark}

Recall that for a nef-partition $\{\Delta_i \mid 1 \leq i \leq s\}$ and its dual $\{\nabla_i \mid 1 \leq i \leq s\}$, we defined in \eqref{eq: Sigma(nabla)} the fan $\Sigma(\nabla)$ with $\nabla=\Conv(\cup_{i=1}^s \nabla_i)$, and the associated toric variety $X(\Sigma(\nabla))$. In  \eqref{eq: sheaf G} we defined the toric invariant Cartier divisor $\mathcal{G}_i$ associated to $\nabla_i$, whose global sections can be identified in the following way
\[
H^0(X(\Sigma(\nabla)), \mathcal{G}_i) \cong \{ \sum_{v \in l(\Delta_i)} c_v
\chi^v \mid c_v \in \Cc \}.
\]
As in \eqref{eq: overline X}, $\{\overline{X_{(\Delta_i)}}\}$ is a family of subschemes of $X(\Sigma(\nabla))$ which are scheme-theoretic intersections of zero loci $V_{f_i}$ of $f_i$ with respect to $\mathcal{G}_i$:
\[
\{\overline{X_{(\Delta_i)}} =\bigcap_{i=1}^s V_{f_i} \mid f_i \in H^0(X(\Sigma(\nabla)),
\mathcal{G}_i) \}.
\]

To show that a general
member of this family is $\Delta$-regular, we first show that they satisfy the requirement on the codimension for each
$T_\sigma$.

\begin{proposition}\label{GeneralSmoothStratification}
For general coefficients $c_v \in \Cc$ of $f_i =  \sum_{v \in
l(\Delta_i) } c_v \chi^v$, $1 \leq i \leq s$, the scheme $T_\sigma
\bigcap \overline{X_{(\Delta_i)}} =T_\sigma \bigcap \left(\cap_{i=1}^s V_{f_i}\right) $ is
either empty or smooth of codimension $s$ for every $T_{\sigma}$.
\end{proposition}

\begin{proof}
Because nefness and base point
freeness are equivalent on toric varieties, the linear system
$|\mathcal{G}_i|$ is base point free.

\medskip

Next, we use a similar argument in the proof of Bertini's theorem (\cite{Hartshorne} III
Corollary 10.9 and Remark 10.9.2) to show that for general
coefficients, either $T_\sigma \cap \overline{X_{(\Delta_i)}}$ is empty or smooth of
codimension $s$, where $\sigma \in \Sigma(\nabla)$. If the dimension of the
linear system $|\mathcal{G}_i|$ is $n_i$, then these linear systems altogether define a
morphism
\[
f: T_\sigma \hookrightarrow X(\Sigma(\nabla)) \to \Pp^{n_1 } \times
\cdots \times \Pp^{n_s}.
\]

Let $\Pp:=\Pp^{n_1 } \times \cdots \times \Pp^{n_s}$, and we
consider it as a homogeneous space under the action of $G:=
\operatorname{PGL}(n_1) \times \cdots \times
\operatorname{PGL}(n_s)$. Let $H_i \to \Pp^{n_i}$ be the inclusion
of a hyperplane $H_i \cong \Pp^{n_i -1}$, and
\[
g: H_1 \times \cdots \times H_s \to \Pp^{n_1} \times \cdots \times
\Pp^{n_s}
\]
be the product of these inclusions.

\medskip

We set $H:= H_1 \times \cdots \times H_s $, and for $\tau \in
G$, let $H^\tau$ be $H$ with the morphism $\tau \circ g$ to $\Pp$.
We can apply Kleiman's theorem (\cite{Hartshorne}~III Theorem 10.8)
to $g$ and conclude that there exists a nonempty open set $W \subset
G$, such that for all $\tau \in W$, $T_\sigma \times_\Pp H^\tau$
is nonsingular and either empty or of codimension $s$. However, one
can show that $f^{-1}(H^\tau)$ is exactly the scheme theoretic
intersection $T_\sigma \cap \overline{X_{(\Delta_i)}}$ defined by the linear systems
$|\mathcal{G}_i|, 1 \leq i \leq s$. This completes the proof.
\end{proof}

Recall that toric Gorenstein, canonical and terminal
singularities are characterized by the combinatorial properties of
cones \cite{Reid} (See also \cite{Batyrev93}):

\begin{proposition}\label{ToricSingularity}
Let $n_1, \ldots, n_r \in N$ be primitive integral generators of all
$1$-dimensional faces of a cone $\sigma \subset N_\Rr$.

\begin{enumerate}
\item $U_{\sigma}$ has Gorenstein singularity if and only if $n_1, \ldots, n_r$
are contained in an affine hyperplane
\[ H_{\sigma} := \{ y \in N_{\Rr} \mid \langle
k_{\sigma}, y \rangle = 1 \},
\] for some $k_{\sigma} \in M$.

\item Assume $U_{\sigma}$ has Gorenstein singularity, then it has
canonical singularity if and only if \[ N \cap \sigma \cap \{y \in
N_{\Rr} \mid \langle  k_{\sigma} , y \rangle <  1 \} = \{ \0 \}. \]

\item Assume $U_{\sigma}$ has Gorenstein singularity, then it has terminal singularity if and only if
\[ N \cap \sigma \cap \{y \in N_{\Rr} \mid \langle  k_{\sigma},
y \rangle \leq  1 \} = \{ \0, n_1, \ldots, n_r \}. \]
\end{enumerate}
\end{proposition}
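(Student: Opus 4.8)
The plan is to translate each of the three algebro-geometric conditions into lattice data via the canonical divisor of $U_\sigma$ and the discrepancy formula for a toric resolution; all three parts then drop out of a single computation. Throughout I write $U_\sigma = \spec(\Cc[\sigma^\vee \cap M])$, let $D_\rho$ be the torus-invariant prime divisor attached to a ray $\rho$ with primitive generator $n_\rho$, and use $K_{U_\sigma} = -\sum_\rho D_\rho$.

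For part (1) I would first invoke the standard fact that a normal toric variety is Cohen--Macaulay, so that $U_\sigma$ is Gorenstein precisely when its canonical divisor is Cartier. A $T$-invariant divisor $\sum_\rho a_\rho D_\rho$ is Cartier on the affine chart $U_\sigma$ iff there is $m \in M$ with $\langle m, n_\rho\rangle = -a_\rho$ for every ray $\rho$ of $\sigma$; applying this to $K_{U_\sigma}$ (all $a_\rho = -1$) shows that $K_{U_\sigma}$ is Cartier iff there exists $k_\sigma \in M$ with $\langle k_\sigma, n_i\rangle = 1$ for all $i$, that is, iff $n_1,\dots,n_r$ lie on $H_\sigma$. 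This is exactly the stated criterion, and since $\sigma$ is full-dimensional the element $k_\sigma$ is unique.

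For parts (2) and (3) I assume $U_\sigma$ Gorenstein, so that on $U_\sigma$ one has $K_{U_\sigma} = \operatorname{div}(\chi^{-k_\sigma})$. Choose a toric resolution $\pi\colon Y \to U_\sigma$ given by a smooth refinement $\Sigma'$ of $\sigma$. Since $\chi^{-k_\sigma}$ is a rational function on both spaces, $\pi^* K_{U_\sigma} = \operatorname{div}_Y(\chi^{-k_\sigma}) = -\sum_{\rho' \in \Sigma'(1)} \langle k_\sigma, v_{\rho'}\rangle D_{\rho'}$, while $K_Y = -\sum_{\rho'} D_{\rho'}$. Subtracting yields the discrepancy formula: the divisor $E_v$ corresponding to a primitive $v \in N \cap \sigma$ has discrepancy $a(E_v) = \langle k_\sigma, v\rangle - 1$, which vanishes exactly on the original rays $n_i$ and records each new ray otherwise. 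Since every primitive lattice point of $\sigma$ occurs as a new ray of some refinement, the condition that $U_\sigma$ be canonical (all exceptional discrepancies $\geq 0$) becomes $\langle k_\sigma, v\rangle \geq 1$ for all nonzero $v \in N \cap \sigma$, which is part (2); and the condition that it be terminal (all exceptional discrepancies $> 0$) becomes $\langle k_\sigma, v\rangle > 1$ for every primitive $v$ other than the $n_i$, which is part (3).

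The hard part will be justifying that it suffices to test only the toric divisors $E_v$ rather than every divisor over $U_\sigma$. This rests on the existence of toric resolutions together with the standard fact that the canonical, respectively terminal, property may be verified on a single log resolution, so that the toric discrepancies computed above already detect the singularity type; here I would follow \cite{Reid}. The remaining content is the bookkeeping that the inequalities on primitive generators extend to all lattice points of $\sigma$ by scaling, using positivity of $\langle k_\sigma, \cdot\rangle$, which is routine.
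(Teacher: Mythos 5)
The paper does not prove this proposition at all---it is quoted as a known result with references to Reid and Batyrev---and your argument is precisely the standard proof from those sources: part (1) via Cohen--Macaulayness of normal toric varieties plus the Cartier criterion applied to $K_{U_\sigma} = -\sum_\rho D_\rho$, and parts (2)--(3) via the discrepancy formula $a(E_v) = \langle k_\sigma, v\rangle - 1$ on a smooth toric refinement, with the reduction to toric divisors justified (as you correctly flag) by the fact that every primitive $v \in N \cap \sigma$ appears as a ray of some refinement together with the single-resolution criterion for canonical and terminal singularities. Your proof is correct; the only cosmetic slip is the claim that $k_\sigma$ is unique because $\sigma$ is full dimensional---the proposition does not assume this---but since any two choices of $k_\sigma$ differ by an element of $\sigma^\perp \cap M$, the conditions in (2) and (3) are independent of the choice and nothing in the argument is affected.
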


\begin{remark}\label{singularity of X(Sigma(nabla))}
Because the only interior lattice point of $\nabla$ is $\0$, we know that the toric variety $X(\Sigma(\nabla))$ has Gorenstein canonical singularities by Proposition \ref{ToricSingularity}.
\end{remark}

\begin{theorem}\label{singularity}
Suppose $X(\Sigma(\nabla))$ has Gorenstein canonical (resp.
terminal) singularities, and each irreducible component of $\overline{X_{(\Delta_i)}}$ satisfies $\Delta$-regularity, then $\overline{X_{(\Delta_i)}}$ is a normal variety with Gorenstein canonical (resp. terminal) singularities.
\end{theorem}

\begin{proof}
Let $U_{\sigma, N}$ be the toric variety associated to
the cone $\sigma$ in the lattice $N$, and $U_{\sigma, N(\sigma)}$ be the toric variety associated to $\sigma$ in the lattice $N(\sigma):=N\cap \Rr \sigma$. Let $\rk N =n, \rk N(\sigma)=l$, then we have
\[
U_{\sigma, N} \cong U_{\sigma, N(\sigma)} \times (\Cc^*)^{n-l}.
\]

Under this identification, $T_\sigma \cong p_\sigma \times
(\Cc^*)^{n-l}$, where $p_\sigma \in U_{\sigma, N(\sigma)}$ is the
unique torus invariant point. Let $f_1,\dots,f_s$ be the restrictions
of Laurent polynomials on $U_{\sigma, N}$. In particular, they are
analytic function on $U_{\sigma, N} \cong U_{\sigma, N(\sigma)}
\times (\Cc^*)^{n-l}$. By $\Delta$-regularity, for any $(p_\sigma
;a_1,\dots,a_{n-l}) \in T_{\sigma} \cap V_{f_1} \cap \cdots \cap
V_{f_s}$, the Jacobian matrix
\[
\left( \frac{\partial g_i}{\partial t_j}(p_\sigma
;a_1,\dots,a_{n-l})\right)_{ij},\quad 1\leq i \leq s, 1\leq j \leq
n-l
\]
has rank $s=\dim (T_{\sigma}) - \dim (T_{\sigma} \cap \overline{X_{(\Delta_i)}})$. By
continuity, in an analytic neighborhood of $(p_\sigma
;a_1,\dots,a_{n-l}) \in U_{\sigma, N(\sigma)} \times (\Cc^*)^{n-l}$,
the matrix
\[
\left( \frac{\partial g_i}{\partial t_j}(\ {x}\ ;\
{t}\ )\right)_{ij},\quad 1\leq i \leq s, 1\leq j \leq n-l
\]
has rank $s$, where ${x} \in U_{\sigma, N(\sigma)}$ in a
neighborhood of $p_\sigma$, and ${t}:=
(t_1,\cdots,t_{n-l}) \in (\Cc^*)^{n-l}$ in a neighborhood of
$(a_1,\cdots,a_{n-l})$.

\medskip

Without loss of generality, we can assume the $s \times s$ minor
with $1 \leq i \leq s, 1 \leq j \leq s$ is nonvanishing. Thus,
we can apply the implicit function theorem to $f_1,\dots,f_s$ and show that there are $s$ analytic functions $u_1,\dots,u_{s}$
defined on an open neighborhood of $(p_\sigma;a_{s+1},\dots,a_{n-l})
\in U_{\sigma, N(\sigma)} \times (\Cc^*)^{n-l-s} $. Moreover, if
${t}':=(t_{s+1},\cdots t_{n-l})$, then any point in a
neighborhood of $(p_\sigma;a_{1},\dots,a_{n-l}) \in U_{\sigma,
N(\sigma)} \times (\Cc^*)^{n-l}$ satisfying $f_1=\cdots=f_s=0$
can be written as
\[
(\ {x}\ ; \ u_1( {x},  {t}'), \cdots,
u_s( {x}, {t}')\ ; \  {t}').
\] This shows that a neighborhood of $(p_\sigma; a_1,
\cdots, a_{n-l}) \in T_{\sigma} \cap \overline{X_{(\Delta_i)}}$ is locally analytically
isomorphic to a product of a neighborhood of $p_\sigma$ in
$U_{\sigma, N(\sigma)}$ with a neighborhood of
$(a_{s+1},\dots,a_{n-l})$ in $(\Cc^*)^{n-l-s}$.

\medskip

From the above isomorphism, we know that $\overline{X_{(\Delta_i)}}$ is normal because $U_{\sigma,
N(\sigma)} \times (\Cc^*)^{n-l}$ is normal and normality is preserved under
an analytic isomorphism. Moreover, the Gorenstein singularity is also a locally analytic property. In fact, the completion of the local ring of a variety is the same as
the completion of the local ring of its analytic space, and a local ring is Gorenstein if and only if its
completion is Gorenstein. Likewise, canonical and terminal
singularities are both local analytic properties 
(\cite{Matsuki}~Proposition~4-4-4). Thus we have proved the theorem.
\end{proof}

\begin{remark}
In general, $\overline{X_{(\Delta_i)}}$ may have singularities. However, it is shown in \cite{Batyrev94} Theorem 4.2.2 that there also exists a crepant partial desingularization (called MPCP-desingularization) of $\overline{X_{(\Delta_i)}}$ with only $\Qq$-factorial terminal singularities. Moreover, when $\dim \overline{X_{(\Delta_i)}}\leq 3$, such partial desingularization is a smooth Calabi-Yau variety (\cite{Batyrev94} Corollary 4.2.3).
\end{remark}

The combination of Proposition \ref{GeneralSmoothStratification} and Theorem \ref{singularity} shows that the general members in $\{\overline{X_{(\Delta_i)}}\}$ are indeed $\Delta$-regular.

\begin{proposition}\label{GenericDeltaRegular}
For general coefficients, $\overline{X_{(\Delta_i)}} =\bigcap_{i=1}^s V_{f_i}$ is a
$\Delta$-regular intersection. It has finite disjoint irreducible
components, and each irreducible component has canonical
Gorenstein singularities.
\end{proposition}
\begin{proof}
By Theorem \ref{singularity}, $\overline{X_{(\Delta_i)}}$ is normal, hence its irreducible components are disjoint. By the argument of Theorem \ref{singularity}, we know that for any point on an irreducible component, there exists a neighborhood
locally analytically isomorphism to an open neighborhood of
dimension $n-s$. This justifies the equidimensional requirement for $\Delta$-regularity. The claim for the singularities follows from  Remark \ref{singularity of X(Sigma(nabla))} and Theorem \ref{singularity}.
\end{proof}

\begin{remark}
From the above argument, one can show that for general
coefficients, and for any subset $I \subset \{1,2,\cdots,s\}$, the
scheme-theoretic intersection $\bigcap_{i \in I} V_{f_i}$ is also
$\Delta$-regular.
\end{remark}

By Proposition \ref{GenericDeltaRegular}, we assume $\overline{X_{(\Delta_i)}}$ to be a
$\Delta$-regular intersection associated to a nef-partition. First
recall following the proposition about the adjunction formula on a
Cohen-Macaulay scheme (\cite{KollarMori} Proposition 5.73).

\begin{proposition}\label{adjunction}
Let $P$ be a projective Cohen-Macaulay scheme of pure dimension $n$
over a field $k$, and $D \subset P$ an effective Cartier divisor.
Then $\omega_D \cong \omega_P(D) \otimes \mathcal{O}_D$. Here
$\omega_D, \omega_P$ are dualizing sheaves of $D, P$ respectively.
\end{proposition}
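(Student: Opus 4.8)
The plan is to realize both $\omega_P$ and $\omega_D$ as $\mathcal{E}xt$-sheaves on a smooth ambient space and to compare them through the structure sequence of $D$. First I would fix a closed embedding $P \hookrightarrow \Pp^N$ into a projective space; as $\Pp^N$ is smooth, its dualizing sheaf $\omega_{\Pp^N}$ is a line bundle. Writing $c = N-n$ for the codimension of $P$, the Cohen--Macaulay hypothesis together with local duality gives the standard identification
\[
\mathcal{E}xt^j_{\Oo_{\Pp^N}}(\Oo_P, \omega_{\Pp^N}) \cong
\begin{cases}
\omega_P, & j = c,\\
0, & j \neq c,
\end{cases}
\]
and I would check that the sheaf $\omega_P$ so obtained is the dualizing sheaf appearing in the statement.

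Next I would note that $D$ is again Cohen--Macaulay, now of pure dimension $n-1$. Locally $D$ is cut out of $P$ by a single equation which is a nonzerodivisor (this is built into the definition of an effective Cartier divisor), and dividing a Cohen--Macaulay local ring of pure dimension by a nonzerodivisor produces a Cohen--Macaulay ring of pure dimension one less. Hence $D$ has codimension $c+1$ in $\Pp^N$, and the same local-duality identification yields
\[
\mathcal{E}xt^j_{\Oo_{\Pp^N}}(\Oo_D, \omega_{\Pp^N}) \cong
\begin{cases}
\omega_D, & j = c+1,\\
0, & j \neq c+1.
\end{cases}
\]

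The comparison then comes from applying $R\mathcal{H}om(-, \omega_{\Pp^N})$ to the structure sequence of the effective Cartier divisor $D$,
\[
0 \to \Oo_P(-D) \to \Oo_P \to \Oo_D \to 0.
\]
Since $\Oo_P(-D) = \Ii_D$ is the invertible ideal sheaf of $D$ in $P$, the projection formula for $\mathcal{E}xt$ against a line bundle gives $\mathcal{E}xt^c_{\Oo_{\Pp^N}}(\Oo_P(-D), \omega_{\Pp^N}) \cong \omega_P(D)$, with vanishing in all other degrees. Feeding the three vanishing patterns into the long exact sequence of $\mathcal{E}xt$ (a contravariant functor, so $\Oo_D$ is the left-hand term), every term collapses except the segment
\[
0 \to \omega_P \to \omega_P(D) \to \omega_D \to 0,
\]
whose left-hand map is induced by the inclusion $\Ii_D \hookrightarrow \Oo_P$, i.e.\ by multiplication by a local equation of $D$. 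Identifying the image of $\omega_P$ inside $\omega_P(D)$ with $\Ii_D \cdot \omega_P(D)$, the cokernel is $\omega_P(D)/\bigl(\Ii_D \cdot \omega_P(D)\bigr) = \omega_P(D) \otimes_{\Oo_P} \Oo_D$, which is the desired isomorphism $\omega_D \cong \omega_P(D) \otimes \Oo_D$.

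The main obstacle is the duality input rather than the diagram chase: one must justify that for a Cohen--Macaulay subscheme of pure codimension $c$ in a smooth projective variety the dualizing sheaf is computed by the single nonvanishing $\mathcal{E}xt^c$ into $\omega_{\Pp^N}$, and that $\mathcal{E}xt$ into this dualizing line bundle turns a twist by a line bundle on $P$ into the inverse twist. Both are standard consequences of Grothendieck--Serre duality, but they carry the real content; once they are available the long exact sequence forces the conclusion.
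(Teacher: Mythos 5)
Your proposal is correct, and it coincides with the standard proof: the paper itself does not prove this proposition but quotes it from Koll\'ar--Mori (\cite{KollarMori}, Proposition 5.73), and your argument---identifying $\omega_P$ and $\omega_D$ as the unique nonvanishing sheaves $\mathcal{E}xt^{c}_{\Oo_{\Pp^N}}(\Oo_P,\omega_{\Pp^N})$ and $\mathcal{E}xt^{c+1}_{\Oo_{\Pp^N}}(\Oo_D,\omega_{\Pp^N})$ for a projective embedding, then running the long exact $\mathcal{E}xt$ sequence of $0 \to \Oo_P(-D) \to \Oo_P \to \Oo_D \to 0$ to extract $0 \to \omega_P \to \omega_P(D) \to \omega_D \to 0$---is essentially the argument given in that reference. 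The duality inputs you single out (the $\mathcal{E}xt^c$ description of the dualizing sheaf of a pure-codimension-$c$ Cohen--Macaulay subscheme of $\Pp^N$ with vanishing off degree $c$, its compatibility with line-bundle twists, and the fact that $D$ is again Cohen--Macaulay of pure dimension $n-1$ since an effective Cartier divisor is locally cut out by a nonzerodivisor) are exactly the standard facts the cited proof rests on, so nothing is missing.
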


Applying this result and combining with Theorem \ref{singularity},
we have the following proposition.

\begin{theorem}\label{complete intersection is CY}
If a general $\overline{X_{(\Delta_i)}}$ is irreducible, then it is a Calabi-Yau
variety (that is, the canonical divisor is trivial) with canonical singularities.
\end{theorem}

\begin{proof}
Let us set $P:=X(\Sigma(\nabla))$ and $\overline X = \overline{X_{(\Delta_i)}}$. Because $P$ is a
Cohen-Macaulay scheme with at worst Gorenstein canonical singularities, we can apply Proposition \ref{adjunction} to
get
\[
\omega_{\overline{X}} \cong \omega_{P}(V_{f_1}+V_{f_2}+\cdots+V_{f_s}) \otimes
\mathcal{O}_{\overline X }.
\] By nef-partition, we have
$\Oo_P(-K_P) \cong \otimes_{i=1}^s
\Oo_P(V_{f_i})$. Therefore
\[
\omega_{P}(V_{f_1}+V_{f_2}+\cdots+V_{f_s}) \cong \mathcal{O}_P(-K_{P} +
V_{f_1}+V_{f_2}+\cdots+V_{f_s} ) \cong \mathcal{O}_P,
\] and hence $\omega_{\overline X } \cong
\Oo_{\overline X }$. On a normal variety, the dualizing sheaf is equivalent to
the canonical sheaf (\cite{KollarMori} Proposition 5.77). Using the
fact that $\overline X $ is a normal variety, we have $K_{\overline X } = 0$. This shows
that $\overline X $ is a Calabi-Yau variety.

\medskip

Just as in Proposition \ref{GenericDeltaRegular}, the claim for the singularities follows from Remark \ref{singularity of X(Sigma(nabla))} and Theorem \ref{singularity}.
\end{proof}

\begin{remark}\label{compatification}
If the nef-partition $\{\Delta_i \mid 1 \leq i \leq s\}$ comes from
$\deg^\vee = \sum_{i=1}^s e_i$ as in Section \ref{sec3}, then $\overline{X_{(\Delta_i)}} \cap
(\Cc^*)^d = X_{(\Delta_i)} =  X_{(e_i)} $. In other words, $\overline{X_{(\Delta_i)}}$ is a natural projective
compactification of $X_{(e_i)}$, and we denote it by
$\overline{X_{(e_i)}}$ as in the proof of Theorem \ref{main}.
\end{remark}

\begin{bibdiv}

\begin{biblist}

\bib{Batyrev93}{article}{
   author={Batyrev, Victor V.},
   title={Variations of the mixed Hodge structure of affine hypersurfaces in algebraic tori},
   journal={ Duke Math. J.},
   volume={69},
   date={1993},
   number={2},
   pages={349-409},

}

\bib{Batyrev94}{article}{
   author={Batyrev, Victor V.},
   title={Dual polyhedra and mirror symmetry for Calabi-Yau hypersurfaces in
   toric varieties},
   journal={J. Algebraic Geom.},
   volume={3},
   date={1994},
   number={3},
   pages={493--535},
   issn={1056-3911},

}

\bib{BB94}{article}{
   author={Batyrev, Victor V.},
   author={Borisov, Lev A.},
   title={On Calabi-Yau complete intersections in toric varieties},
   journal={Higher-dimensional complex varieties (Trento, 1994)},
   pages={39--65},
   place={Berlin}

}

\bib{BB96}{article}{
   author={Batyrev, Victor V.},
   author={Borisov, Lev A.},
   title={Mirror duality and string-theoretic Hodge numbers},
   journal={Invent. Math.},
   volume={126},
   date={1996},
   number={1},
   pages={183--203},
   issn={0020-9910},

}

\bib{BB97}{article}{
   author={Batyrev, Victor V.},
   author={Borisov, Lev A.},
   title={Dual cones and mirror symmetry for generalized Calabi-Yau
   manifolds},
   conference={
      title={Mirror symmetry, II},
   },
   book={
      series={AMS/IP Stud. Adv. Math.},
      volume={1},
      publisher={Amer. Math. Soc.},
      place={Providence, RI},
   },
   date={1997},
   pages={71--86},

}
\bib{BN08}{article}{
   author={Batyrev, Victor},
   author={Nill, Benjamin},
   title={Combinatorial aspects of mirror symmetry},
   conference={
      title={Integer points in polyhedra---geometry, number theory,
      representation theory, algebra, optimization, statistics},
   },
   book={
      series={Contemp. Math.},
      volume={452},
      publisher={Amer. Math. Soc.},
      place={Providence, RI},
   },
   date={2008},
   pages={35--66},

}

\bib{BH93}{article}{
   author={Berglund, Per},
   author={H{\"u}bsch, Tristan},
   title={A generalized construction of mirror manifolds},
   journal={Nuclear Phys. B},
   volume={393},
   date={1993},
   number={1-2},
   pages={377--391},
   issn={0550-3213},

}

\bib{Borisov93}{article}{
   author={Borisov, Lev},

   title={Towards the mirror symmetry for Calabi-Yau complete intersections in Gorenstein toric Fano varieties},
   journal={arXiv: math.AG/9310001},

}

\bib{BL16}{article}{
  title={On Clifford double mirrors of toric complete intersections},
  author={Borisov, Lev },
  author={Li, Zhan}
  journal={arXiv:1601.00809},
  year={2016}
}

\bib{Bridgeland}{article}{
   author={Bridgeland, Tom},

   title={Flops and derived categories},
   journal={Invent. Math},
   volume={147},
   date={2002},
   pages={613--632},

}

\bib{CKP14}{article}{
   author={Coates, Tom},
   author={Kasprzyk, Alexander}
   author={Prince, Thomas}
   title={Four-dimensional Fano toric complete intersections},
   journal={Proceedings. Mathematical, Physical, and Engineering Sciences/the Royal Society},
   volume={471},
   issue={471(2175)},
   date={2015},
   }

\bib{CT14}{article}{
   author={Calabrese, John R.},
   author={Thomas, Richard P.}
   title={Derived equivalent Calabi-Yau threefolds from cubic fourfolds},
   journal={Mathematische Annalen},
   volume={},
   issue={},
   date={2015},
   pages={1-18}

}

\bib{Clarke13}{article}{
   author={Clarke, Patrick},
   title={A proof of the birationality of certain BHK-mirrors},
   journal={Complex Manifolds},
   volume={1},
   issue={1},
   date={2014},
   pages={45-51},

}

\bib{ToricVariety}{article}{
author={Cox,David}, author={Little,John}, author={Schenck,Henry}
title={Toric varieties}, book={publisher={American Mathematical
Society},}, date={2011}, }

\bib{CK99}{book}{
   author={Cox, David },
   author={Katz, Sheldon},
   title={Mirror symmetry and algebraic geometry},
   series={Mathematical Surveys and Monographs},
   volume={68},
   publisher={American Mathematical Society, Providence, RI},
   date={1999},
   pages={xxii+469},
   isbn={0-8218-1059-6},
}

\bib{FK14}{article}{
   author={Favero, David},
   author={Kelly, Tyler L.}
   title={Proof of a Conjecture of Batyrev and Nill},
   journal={arXiv:1412.1354 },

}

\bib{Gross05}{article}{
   author={Gross, Mark},
   title={Toric degenerations and Batyrev-Borisov duality},
   journal={Math. Ann.},
   volume={333},
   date={2005},
   number={3},
   pages={645--688},
   issn={0025-5831},

}

\bib{GS06}{article}{
   author={Gross, Mark},
   author={Siebert, Bernd},
   title={Mirror symmetry via logarithmic degeneration data. I},
   journal={J. Differential Geom.},
   volume={72},
   date={2006},
   number={2},
   pages={169--338},

}

\bib{HC15}{article}{
   author={Harder, Andrew},
   author={Charles F. Doran}
   title={Toric Degenerations and the Laurent polynomials related to Givental's Landau-Ginzburg models},
   journal={arXiv:1502.02079 },

}

\bib{Hartshorne}{article}{
author={Hartshorne,Robin}, title={Algebraic geometry },
book={publisher={ Springer-Verlag},}, date={1977}, }

\bib{Jouanolou}{article}{
author={Jouanolou, Jean-Pierre},
      title={Th\'{e}or\`{e}mes de Bertini
et applications},
  book={
  series={Progress in Math. 42}
   publisher={Birkh\"auser},
   place={Boston, Basel, Stuttgart},
},

   date={1983},
}

\bib{Kelly13}{article}{
   author={Kelly, Tyler L.},
   title={Berglund--H\"ubsch--Krawitz mirrors via Shioda maps},
   journal={Adv. Theor. Math. Phys.},
   volume={17},
   date={2013},
   number={6},
   pages={1425--1449},
}

\bib{Kollar95}{article}{
   author = {Koll\'{a}r, J\'{a}nos},
    title = {Rational curves on algebraic varieties},
    book={publisher={ Springer-Verlag},},
    date = {1995}

}

\bib{KollarMori}{article}{
   author = {Koll\'{a}r, J\'{a}nos},
   author ={Mori, Shigefumi}
    title = {Birational geometry of algebraic varieties},
    book={publisher={ Cambridge University Press},},
    date = {1998}

}

\bib{Kontsevich94}{article}{
   author={Kontsevich, Maxim},
   title={Homological algebra of mirror symmetry},
   conference={
      title={ 2},
      address={Z\"urich},
      date={1994},
   },
   book={
      publisher={Birkh\"auser},
      place={Basel},
   },
   date={1995},
   pages={120--139},

}

\bib{Kra10}{book}{
   author={Krawitz, Marc},
   title={FJRW rings and Landau-Ginzburg mirror symmetry},
   note={Ph. D Thesis, University of Michigan},
   pages={67},
   isbn={978-1124-28080-6},
      date={2010},
}

\bib{KRS03}{article}{
   author={Kreuzer, Maximilian},
   author={Riegler, Erwin},
   author={Sahakyan, David A.},
   title={Toric complete intersections and weighted projective space},
   journal={J. Geom. Phys.},
   volume={46},
   date={2003},
   number={2},
   pages={159--173},
}

\bib{Matsuki}{article}{
   author = { Matsuki,Kenji},
    title = {Introduction to the Mori program},
    book={publisher={ Springer-Verlag},},
    date = {2002}

}

\bib{Pri}{article}{
   author = {Prince, Thomas},
    title = {Ph.D. thesis,},
    book={publisher={Imperial College London, In preparation.},},

}

\bib{Reid}{article}{
   author={Reid, Miles},
   title={Decomposition of toric morphisms},
   conference={
   title={Arithmetic and Geometry, Vol.II}, }
  book={
  series={Geometry Progress in Math. 36}
   publisher={Birkh\"auser},
   place={Boston, Basel, Stuttgart},
},

   date={1983},
   pages={395-418},

}

\bib{Shoemaker14}{article}{
   author={Shoemaker, Mark},
   title={Birationality of Berglund-H\"ubsch-Krawitz mirrors},
   journal={Comm. Math. Phys.},
   volume={331},
   date={2014},
   number={2},
   pages={417--429},
   issn={0010-3616},
}

\end{biblist}

\end{bibdiv}

\end{document}